\newtheorem{theorem}{Theorem}
\newtheorem{proposition}{Proposition}
\newtheorem{problem}{Problem}
\newtheorem{definition}{Definition}
\begin{document}

\title{An Etude on One Sharygin's Problem}\thanks{The research is partially supported by NI13 FMI-002.}

\author{Boyan Zlatanov}

\maketitle

{\sc Abstract:} {\small By the methods of the synthetic geometry we investigate properties of objects generated from
a complete quadrangle and a line, which lies in its plane. We start with a problem from the book of Sharygin ``Problems in Plane Geometry''.
We generalize this problem with the help of Pappus, Desargues and Pascal's Theorems and we discover new concurrent lines, collinear points,
and conic sections.}

\section{Introduction}
\label{intro}
The book of Sharygin ``Problems in Plane Geometry'' \cite{RefSh} is a collection of interesting and various in difficulty problems. A challenge that can unfold an entire world can be even a small mathematical problem. The Dynamic Geometry Software (DGS) facilitates substantially the efforts of the mathematicians in this direction \cite{RefKTZ}, \cite{RefZ}. We would like to illustrate an evolution of the idea implemented in a small school problem (\cite{RefSh}, Problem 34, p. 72) that is based on projective and combinatorial methods with the help of DGS.
\section{Preliminary}
\label{sec:1}
The investigations in the present work relate the Euclidian model of the Projective plane, i.e.
the Euclidian plane complimented with its infinite points and its infinite line $\omega$.

\begin{theorem}\label{th1}(Pappus) Let be given two lines $g$ and $g^\prime$  . If  $A, B, C\in g$ and $A^\prime , B^\prime , C^\prime\in g$,
then the points $P=BC^\prime\cap CB^\prime$, $Q=AC^\prime\cap CA^\prime$, $R=AB^\prime\cap BA^\prime$ are collinear.
\end{theorem}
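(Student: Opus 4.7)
The plan is to give the classical synthetic proof by applying Menelaus's theorem five times to a single auxiliary triangle, and assembling the resulting ratio identities into the Menelaus condition for the three points $P$, $Q$, $R$.

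First I would fix the triangle $XYZ$ defined by $X = CA' \cap AB'$, $Y = AB' \cap BC'$, $Z = BC' \cap CA'$. Its three sides lie on the lines $AB'$, $BC'$, $CA'$, and all the points in the picture can be organized onto these sides: $A, B'$ lie on $XY$; $B, C'$ lie on $YZ$; $C, A'$ lie on $ZX$; while $R \in XY$, $P \in YZ$, $Q \in ZX$ (possibly on the extensions). This choice of auxiliary triangle is really the only non-routine construction in the argument; the rest is bookkeeping.

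Next I would apply Menelaus's theorem, with signed ratios, to $\triangle XYZ$ for the following five transversals: the line $g = ABC$, the line $g' = A'B'C'$, and the three ``mixed'' lines $BRA'$, $CPB'$, $AQC'$. Each transversal produces a product of three signed ratios equal to $-1$. Multiplying the three identities coming from the mixed lines and dividing by the product of the identities from $g$ and $g'$, every ratio involving the six labels $A, B, C, A', B', C'$ appears exactly once in the numerator group and once in the denominator group, so it cancels. What remains is
\begin{equation*}
\frac{\overline{YP}}{\overline{PZ}} \cdot \frac{\overline{ZQ}}{\overline{QX}} \cdot \frac{\overline{XR}}{\overline{RY}} = -1,
\end{equation*}
which, by the converse of Menelaus applied to $\triangle XYZ$, is exactly the condition that $P$, $Q$, $R$ are collinear.

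The main obstacle is not conceptual but combinatorial: one must verify that after the multiplication and division the six ``mixed'' ratios pair up and cancel correctly, and that the orientations on the sides of $\triangle XYZ$ are fixed consistently so that the net sign on the right-hand side remains $-1$ (an even number of the five Menelaus identities must be inverted). Degenerate configurations, in which one of the nine points lies on $\omega$ or the auxiliary triangle collapses, should also be acknowledged; since the statement is projective and collinearity is a closed condition, they can be absorbed by a general-position argument or dispatched individually without any new geometric idea.
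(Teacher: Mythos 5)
The paper does not prove Theorem~\ref{th1} at all: Pappus's theorem is listed in the Preliminary section as a known classical result and is used throughout as a black box, so there is no in-paper proof to compare against. Your Menelaus argument is the standard classical proof and it is correct as far as it goes: with $X=CA'\cap AB'$, $Y=AB'\cap BC'$, $Z=BC'\cap CA'$, the sides $XY$, $YZ$, $ZX$ are the lines $AB'$, $BC'$, $CA'$, the five transversals $BA'R$, $CB'P$, $AC'Q$, $g$, $g'$ each meet all three sides, the six mixed ratios do cancel after multiplying the first three identities and dividing by the last two, and the net sign is $(-1)^3/(-1)^2=-1$, so the converse of Menelaus gives the collinearity of $P$, $Q$, $R$. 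The one point you should treat less casually is the degenerate/infinite case: in this paper Pappus is applied almost exclusively to triads containing points of $\omega$ (e.g.\ $(B,C,U_\infty)$ and $(A,V_\infty,D)$ in the solution of Problem~\ref{pr1}), so the configurations you defer to a ``general-position argument'' are precisely the ones the paper needs. They are genuinely covered, but it is cleaner to say explicitly how: either precompose with a projective transformation sending $\omega$ to a finite line (collinearity is projectively invariant, so the affine Menelaus proof then applies verbatim), or invoke continuity of the incidence condition under a limit of finite configurations. Alternatively, a fully projective proof (cross-ratio chasing through two perspectivities, or Pappus as the degenerate conic case of Theorem~\ref{th3}) avoids the issue entirely; your route is more elementary but pays for it with exactly this case analysis.
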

A triangle is called the set of three noncollinear points and their three joining lines.
\begin{theorem}\label{th2}(Desargues)
The connecting lines of the couples of corresponding vertices of two triangles $ABC$ and $A^\prime B^\prime C^\prime$ are intersecting at a point $S$ if and only if the intersection points of the couples of corresponding sides $P=BC\cap BC$, $Q=AC\cap AC$, $R=AB\cap AB$ lie on a line $s$.
\end{theorem}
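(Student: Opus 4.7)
The plan is to prove the two implications of the biconditional separately. For the forward direction (concurrency of the three vertex-joining lines implies collinearity of $P,Q,R$) I would use three applications of Menelaus's theorem. For the converse I would reduce to the forward direction, applied to a carefully chosen pair of auxiliary triangles.

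The motivating picture is three-dimensional: if the two triangles could be realised as planar sections of two distinct planes in $3$-space sharing a common apex $S$, then each pair of corresponding sides would meet on the line common to those planes, forcing $P,Q,R$ to be collinear. In the strictly planar setting this lift is unavailable, but it explains why the elementary combinatorics that follow work out.

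For the forward direction, assume that $AA'$, $BB'$, $CC'$ concur at $S$, and apply Menelaus's theorem to each of the triangles $SBC$, $SCA$, $SAB$ cut by the transversals $B'C'$, $C'A'$, $A'B'$, respectively. This yields three signed ratio identities, each equal to $-1$. Multiplying them together, the ratios along the concurrent lines $SA$, $SB$, $SC$ pair off and cancel, and what survives is precisely the Menelaus condition
\[
\frac{\overline{BP}}{\overline{PC}}\cdot\frac{\overline{CQ}}{\overline{QA}}\cdot\frac{\overline{AR}}{\overline{RB}}=-1
\]
on triangle $ABC$, which is equivalent to the collinearity of $P,Q,R$.

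For the converse, assume $P,Q,R$ are collinear and set $S=BB'\cap CC'$; the task reduces to showing that $A$, $A'$, $S$ are collinear. I would apply the forward implication to the two auxiliary triangles $RBB'$ and $QCC'$: the lines joining their corresponding vertices are $RQ$ (the line $s$ supporting $P,Q,R$), $BC$, and $B'C'$, and all three pass through $P$. The forward direction therefore forces the three intersection points of corresponding sides of $RBB'$ and $QCC'$ to be collinear; these turn out to be $A$, $A'$, and $S$, as required. The main obstacle is the careful bookkeeping of signs and of degenerate subcases when a vertex or intersection point lies on the infinite line $\omega$ declared in Section~\ref{sec:1}; a fully projective reformulation with cross-ratios would make these subtleties disappear.
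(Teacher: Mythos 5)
The paper does not prove this statement at all: Desargues' theorem appears in Section~\ref{sec:1} as a classical preliminary, quoted without proof (and with typographical slips --- the intersections should of course read $P=BC\cap B^\prime C^\prime$, $Q=AC\cap A^\prime C^\prime$, $R=AB\cap A^\prime B^\prime$), and is then only \emph{used} throughout Sections~\ref{sec:2} and~\ref{sec:3}. So there is no argument of the author's to compare yours against. Your proposal is the standard textbook proof and its skeleton is sound: the forward direction by Menelaus applied to $SBC$, $SCA$, $SAB$ with transversals $B^\prime C^\prime$, $C^\prime A^\prime$, $A^\prime B^\prime$, with the ratios along $SA$, $SB$, $SC$ cancelling in the product to leave the Menelaus relation on $ABC$; and the converse by self-duality, i.e.\ by applying the forward implication to the triangles $RBB^\prime$ and $QCC^\prime$, whose vertex-joining lines $RQ=s$, $BC$, $B^\prime C^\prime$ concur at $P$ and whose corresponding sides meet in $A=RB\cap QC$, $A^\prime=RB^\prime\cap QC^\prime$, $S=BB^\prime\cap CC^\prime$. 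The one substantive caveat is the one you yourself flag: Menelaus is an affine, signed-ratio statement, whereas the paper works in the extended Euclidean plane, so the cases where $S$, $P$, $Q$, $R$ or a vertex lies on $\omega$ (and the degeneracies where $RBB^\prime$ or $QCC^\prime$ collapses to a collinear triple) are not covered by the computation as written and must either be treated separately or absorbed into a genuinely projective argument. Since the paper freely applies Theorem~\ref{th2} to configurations involving infinite points, a complete proof in this context would need that projective version, not just the Euclidean one.
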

Two triangles that satisfied the conditions of Theorem \ref{th2} are called perspective. The point $S$ is called perspective center and the line $s$ is called a perspective axis.
\begin{theorem}\label{th3}(Pascal)
A hexagon $AB^\prime CA^\prime BC^\prime$ is inscribed in a conic section $k$ if and only if the
points $P=AB^\prime\cap A^\prime B$, $Q=B^\prime C\cap BC^\prime$, $R=CA^\prime\cap C^\prime A$ are collinear.
\end{theorem}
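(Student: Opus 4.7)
The plan is to prove both directions separately, using in both a standard projective fact (Steiner): the pencils of lines through two distinct points of a conic are projectively related through the conic, i.e., for any four further points $X_1,X_2,X_3,X_4$ of the conic the cross-ratios of the two pencils on $X_1,X_2,X_3,X_4$ coincide.

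\emph{Direct implication.} Assume $A,B',C,A',B,C'$ lie on the conic $k$. The Steiner projectivity between the pencils at $B'$ and at $C'$ pairs the rays $B'A\leftrightarrow C'A$, $B'A'\leftrightarrow C'A'$, $B'B\leftrightarrow C'B$, $B'C\leftrightarrow C'C$. Projecting the first pencil from the transversal $A'B$ and the second from the transversal $CA'$ converts this into a projectivity $\psi\colon A'B\to CA'$. Under $\psi$ the ray $B'A$ sends $P=AB'\cap A'B$ to $R=C'A\cap CA'$, and the rays $B'A',C'A'$ both hit their transversals at the common point $A'=A'B\cap CA'$, so $A'$ is fixed by $\psi$. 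Hence $\psi$ is a perspectivity. Its center is forced onto every line joining corresponding points: tracking $B'C$, its image $C'C$ meets $CA'$ at $C$, and the join of $B'C\cap A'B$ to $C$ is simply the line $B'C$; tracking $B'B$ analogously shows the center lies on $BC'$. Therefore the center is $Q=B'C\cap BC'$, and since $P\mapsto R$ the three points $P,Q,R$ are collinear.

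\emph{Converse.} Assume $P,Q,R$ are collinear. Through the five points $A,B',C,A',B$ (in general position) there passes a unique conic $k'$. Let $C^\ast$ be the second intersection of the line $BQ$ with $k'$; by construction $C^\ast\in BC'$. Applying the direct implication to the hexagon $AB'CA'BC^\ast$ inscribed in $k'$ yields the collinearity of $P^\ast=AB'\cap A'B$, $Q^\ast=B'C\cap BC^\ast$, and $R^\ast=CA'\cap C^\ast A$. Trivially $P^\ast=P$, and since $C^\ast\in BC'$ we have $Q^\ast=Q$; hence $R^\ast$ lies on $PQ$, which by hypothesis equals $PR$. As both $R^\ast$ and $R$ lie on $CA'$ and on this same line, $R^\ast=R$, so $C^\ast A$ passes through $R$ and $C^\ast$ lies on $C'A$. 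Combined with $C^\ast\in BC'$ this gives $C^\ast=C'$, whence $C'\in k'$, finishing the proof.

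\emph{Main obstacle.} The delicate step is the bookkeeping in the direct implication: one must identify the center of the perspectivity $\psi$ with precisely the point $Q$ rather than with some other intersection of two sides of the hexagon, which requires carefully matching each of the four rays of one pencil with the correct ray of the other under the Steiner projectivity. A secondary technical matter is the verification of the argument in the degenerate configurations admitted by the projective closure $\omega$ of Section~\ref{sec:1}---for example when $A'B\parallel CA'$ or when a vertex lies at infinity---which are absorbed uniformly by working in the projective plane.
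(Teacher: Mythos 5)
Your proof is correct. Note first that the paper itself offers no proof of Theorem~\ref{th3}: Pascal's theorem is quoted in Section~\ref{sec:1} as a classical preliminary (alongside Pappus and Desargues) and is simply used as a tool later on, so there is no ``paper's approach'' to compare against. What you give is the standard argument: the forward direction via Steiner's theorem, turning the projectivity between the pencils at $B'$ and $C'$ into a projectivity between the ranges on $A'B$ and $CA'$, observing that it fixes the common point $A'$ and is therefore a perspectivity, and then identifying its center as $Q$ by chasing the rays through $C$ and $B$; and the converse via the unique conic through the five points $A,B',C,A',B$ and the auxiliary point $C^\ast$, forced to coincide with $C'$. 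The bookkeeping in the forward direction checks out ($P\mapsto R$, center on $B'C$ and on $BC'$), and the genericity caveats you flag (e.g.\ $Q\neq B$, $PQ\neq CA'$, five points in general position) are exactly the ones that need to be absorbed by the projective setting or by a separate treatment of degenerate hexagons. This is a complete and appropriate proof of the quoted result.
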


The line that is incident with the points $P, Q, R$ is called Pascal's line.

A complete quadrangle is called the set of four points $P$, $Q$, $R$, $S$, of which no three are collinear, and the lines $QR$, $PS$, $RP$, $QS$, $PQ$, $RS$.
The intersections of the opposite sides $A=QR\cap PS$, $B=RP\cap QS$, $C=PQ\cap RS$ are called diagonal points and they are the vertices of the diagonal triangle of the complete quadrangle $PQRS$.

\begin{theorem}\label{th4}(Pappus--Desargues)
The three pairs of opposite sides of a complete quadrangle intersect a line, which is not incident with any of its vertices, in three pairs of corresponding points for one and the same involution.
\end{theorem}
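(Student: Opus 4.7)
The plan is to fix notation for the six intersection points of $\ell$ with the sides of the quadrangle, realise two perspectivities from vertices of $PQRS$ onto the diagonal side $QR$, and reduce the theorem to a standard cross-ratio criterion for involution.

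Concretely, I would label $A_1=\ell\cap QR$, $A_2=\ell\cap PS$, $B_1=\ell\cap RP$, $B_2=\ell\cap QS$, $C_1=\ell\cap PQ$, $C_2=\ell\cap RS$, and set $A=QR\cap PS$. Projecting $\ell$ onto $QR$ from $P$ sends $A_1\mapsto A_1$, $A_2\mapsto A$, $B_1\mapsto R$, $C_1\mapsto Q$ by immediate line-intersection computations, and invariance of cross-ratio under a perspectivity yields
\[
(A_1,A_2;B_1,C_1)_\ell = (A_1,A;R,Q)_{QR}.
\]
Repeating the construction with the perspectivity from $S$ onto $QR$ sends $A_1\mapsto A_1$, $A_2\mapsto A$, $B_2\mapsto Q$, $C_2\mapsto R$, so
\[
(A_1,A_2;B_2,C_2)_\ell = (A_1,A;Q,R)_{QR}.
\]
The two right-hand cross-ratios differ only by a swap of the last two entries, hence their product equals one:
\[
(A_1,A_2;B_1,C_1)_\ell\cdot(A_1,A_2;B_2,C_2)_\ell = 1.
\]

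The concluding step is to read off from this identity that the three pairs $(A_1,A_2)$, $(B_1,B_2)$, $(C_1,C_2)$ belong to one and the same involution on $\ell$. Choosing a projective parameter with $A_1=0$ and $A_2=\infty$, the cross-ratios become $b_1/c_1$ and $b_2/c_2$, so the identity reads $b_1 b_2 = c_1 c_2$; this is precisely the condition that the involution $t\mapsto b_1 b_2/t$, which already pairs $A_1$ with $A_2$, also pairs $B_1$ with $B_2$ and $C_1$ with $C_2$.

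The main obstacle is neither perspectivity calculation nor the algebra above but this last conceptual step, which relies on the standard cross-ratio characterisation of involutions on a projective line; I would either cite it as a classical fact or insert it as a short lemma preceding the proof. A secondary care-point concerns the degenerate configurations in which $\ell$ passes through a diagonal point of $PQRS$, so that one of the pairs collapses; there the involution is to be read as having a fixed point at that diagonal point, and the statement follows by a limiting argument from the generic case.
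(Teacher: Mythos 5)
The paper states Theorem \ref{th4} as a classical preliminary and gives no proof of its own, so there is nothing to compare against; judged on its merits, your argument is correct and is essentially the standard cross-ratio proof of the Desargues involution theorem for a complete quadrangle. The two perspectivity computations (from $P$ and from $S$ onto $QR$) are right: both send $A_1\mapsto A_1$ and $A_2\mapsto A$, while the remaining points land on $R,Q$ in one order and $Q,R$ in the other, so the two cross-ratios are reciprocal and their product is $1$. Your coordinate normalisation $A_1=0$, $A_2=\infty$ then gives $b_1b_2=c_1c_2$, and the map $t\mapsto b_1b_2/t$ is visibly the unique projective involution pairing $0$ with $\infty$ and $b_1$ with $b_2$; the identity $b_1b_2=c_1c_2$ forces it to pair $c_1$ with $c_2$ as well. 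This means the ``conceptual step'' you flag as the main obstacle is in fact already discharged by your own computation --- no external lemma is needed beyond the definition of an involution as a projective transformation of period two. Two small points of care: the perspectivities are legitimate because $P$ and $S$ lie neither on $\ell$ (the hypothesis excludes vertices) nor on $QR$ (no three vertices of a complete quadrangle are collinear), which you should say explicitly; and your remark about $\ell$ through a diagonal point is apt, since the paper's hypothesis does not exclude that case --- there $A_1=A_2$ becomes a fixed point of the involution determined by the other two pairs, which can be argued directly rather than by a limit (note that the paper's later Definition \ref{d2} excludes this degeneracy anyway).
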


\begin{theorem}\label{th5}(\cite{RefC2}, Theorem 6.43, p. 73)
The diagonal triangle of every inscribed in a conic section complete quadrangle is self--polar.
\end{theorem}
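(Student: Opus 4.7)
The plan is to show that each diagonal point of the complete quadrangle $PQRS$ is the pole, with respect to $k$, of the opposite side of the diagonal triangle; by the obvious symmetry between $A,B,C$ it suffices to prove this for $A=QR\cap PS$, i.e.\ that the polar of $A$ is the line $BC$.

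The first and main step is the classical harmonic property of the complete quadrangle: on the secant $PS$ through $A$, the points $A$ and $M:=PS\cap BC$ are harmonic conjugates with respect to $P$ and $S$, so that $(P,S;A,M)=-1$. I would derive this by two perspectivities. Set $N:=QR\cap BC$ and consider the four collinear points $B,C,M,N$ on the line $BC$. Projecting from $R$ onto the line $PS$ sends $B\mapsto P$ (because $B\in PR$), $C\mapsto S$ (because $C\in RS$), $M\mapsto M$ and $N\mapsto A$ (because $N\in QR$), so $(B,C;M,N)=(P,S;M,A)$. Projecting the same four points from $Q$ onto $PS$ sends $B\mapsto S$, $C\mapsto P$, $M\mapsto M$ and $N\mapsto A$, giving $(B,C;M,N)=(S,P;M,A)$. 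Since a perspectivity preserves the cross--ratio, the two right-hand sides are equal, and combined with the standard transposition rule $(S,P;M,A)=1/(P,S;M,A)$ this forces $(P,S;M,A)^{2}=1$. Because $P,S,M,A$ are pairwise distinct (which uses only that $P,Q,R,S$ are in general position and that $A,B,C$ are non-collinear), the common value is $-1$, as claimed. The same argument, with the roles of the secants $PS$ and $QR$ interchanged, yields $(Q,R;A,N)=-1$.

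The second step is to invoke the standard projective definition of the polar of a point with respect to a conic as the locus of harmonic conjugates on secants through that point. The secant $PS$ meets $k$ at $P$ and $S$ and the harmonic conjugate of $A$ with respect to $\{P,S\}$ is $M$, so $M$ lies on the polar of $A$; likewise the secant $QR$ meets $k$ at $Q$ and $R$ and the harmonic conjugate of $A$ with respect to $\{Q,R\}$ is $N$, so $N$ lies on the polar of $A$. Two distinct points $M,N$ of the line $BC$ lie on the polar of $A$, hence the polar of $A$ is exactly $BC$. Applying the same reasoning with $B$ and with $C$ in place of $A$ shows that the polar of $B$ is $CA$ and the polar of $C$ is $AB$, so $ABC$ is self--polar.

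The main obstacle is the harmonic--range lemma itself; once that projective identity is established the conclusion is essentially immediate from the definition of the polar line.
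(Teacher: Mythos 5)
Your proof is correct. Note first that the paper itself offers no proof of this statement: Theorem \ref{th5} is imported verbatim from Coxeter (\cite{RefC2}, Theorem 6.43) as a known tool, so there is no ``paper's proof'' to compare against. Your argument is the classical self-contained one and it holds up: the two perspectivities from $R$ and from $Q$ correctly send $B,C,M,N$ to $P,S,M,A$ and to $S,P,M,A$ respectively (since $B\in RP\cap QS$, $C\in RS\cap PQ$, $N\in QR$), the transposition rule then forces $(P,S;M,A)^2=1$, and the pairwise distinctness of $P,S,M,A$ (which follows from the general position of $P,Q,R,S$ and the non-collinearity of $A,B,C$) rules out the value $+1$. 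Combined with the harmonic-conjugate characterization of the polar and the observation that $M\ne N$ (else $A=PS\cap QR$ would lie on $BC$), this pins down the polar of $A$ as $BC$, and symmetry finishes the job. This is essentially Coxeter's own route; the only points worth making fully explicit are that each diagonal point lies off the conic (so the polar-as-harmonic-locus description applies) and the distinctness checks you allude to but do not spell out.
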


The present investigation is inspired by one problem of Sharygin. Our results can be considered as improvisations
on the idea, that is encoded into the next problem.

\begin{problem}\label{pr1} (\cite{RefSh}, problem 34, p. 72)
Let $ABCD$ be a quadrangle and the points $M\in AC$ and $B\in BD$ be such that $BM\parallel AD$
and $AN\parallel BC$. Prove that $MN\parallel CD$.
\end{problem}
\begin{figure}
\epsfig{file=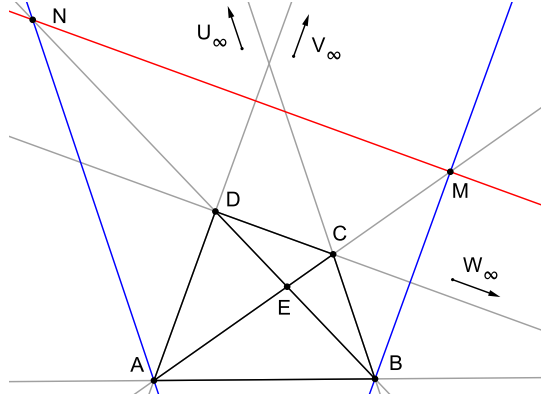,width=0.45\textwidth}
\caption{Sharygin's Problem}
\label{fig:4}
\end{figure}
The proof given by Sharygin is based on the proportionality of the corresponding sides of the similar triangles
$EMB$ and $EAD$, $EBC$ and $ENA$, where $E=AC\cap BD$ and Thales' Theorem.

\section{A Generalized Sharygin's Problem}
\label{sec:2}

We would like to present another solution of Problem \ref{pr1}, that is based on Pappus' Theorem. This solution will
help us to investigate some properties of a quadrangle and a line which lies in its plane.
We would like to mention that a similar approach have been used for generalizing of another Sharygin's Problem in (\cite{RefKTZ}, Problems 12 and 13).
\begin{proof}
Let consider the quadrangle $ABCD$ in the extended Euclidian plane and let us introduce the notations: $U_\infty =BC\cap\omega$, $V_\infty =AD\cap\omega$, $W_\infty =CD\cap\omega$ (Fig. \ref{fig:4}).
Let us consider the ordered triads of collinear points $(B, C, U_\infty )$ and $(A, V_\infty , D)$.
According to Theorem \ref{th1} it follows that the points $W_\infty =CD\cap U_\infty V_\infty =CD\cap\omega$, $M=BV_\infty\cap AC$ and $N=AU_\infty\cap BD$ are collinear, which means that $W_\infty\in MN$, i.e. $MN\parallel CD$.
\end{proof}

The proof, which we have presented is not only shorter, but it gives a possibility to develop the idea encoded into this small school Sharygin's problem. We can generate $48$ similar problems band together with a common simple solution. We will state Problem \ref{pr1} in the language of the extended Euclidian plane.

\noindent{\bf Problem \ref{pr1}$^\ast$}
Let $ABCD$ be a quadrangle and $U_\infty$ be the infinite point on the line $BC$ and $V_\infty$ be the infinite point on the line $AD$. Prove that the points $M=BV_\infty\cap AC$, $N=AU_\infty\cap BD$ and $W_{\infty}=CD \cap \omega$ are collinear, which means that $MN\parallel CD$.

We notice in the proof of Problem \ref{pr1}, that the infinite points $U_\infty$ and $V_\infty$ can be replaced with finite points $U\in BC$ or $V\in AD$. Thus we can state a variant of Problem \ref{pr1}$^\ast$.
\begin{problem}\label{pr2}
Let $ABCD$ be a quadrangle
and $U\in BC$ and $V\in AD$ be arbitrary points. Prove that the points $M=BV\cap AC$, $N=AU\cap BD$ and $W=CD\cap UV$ are collinear, i.e. the lines $MN$, $CD$, $UV$ are concurrent.
\end{problem}
\begin{figure}
\epsfig{file=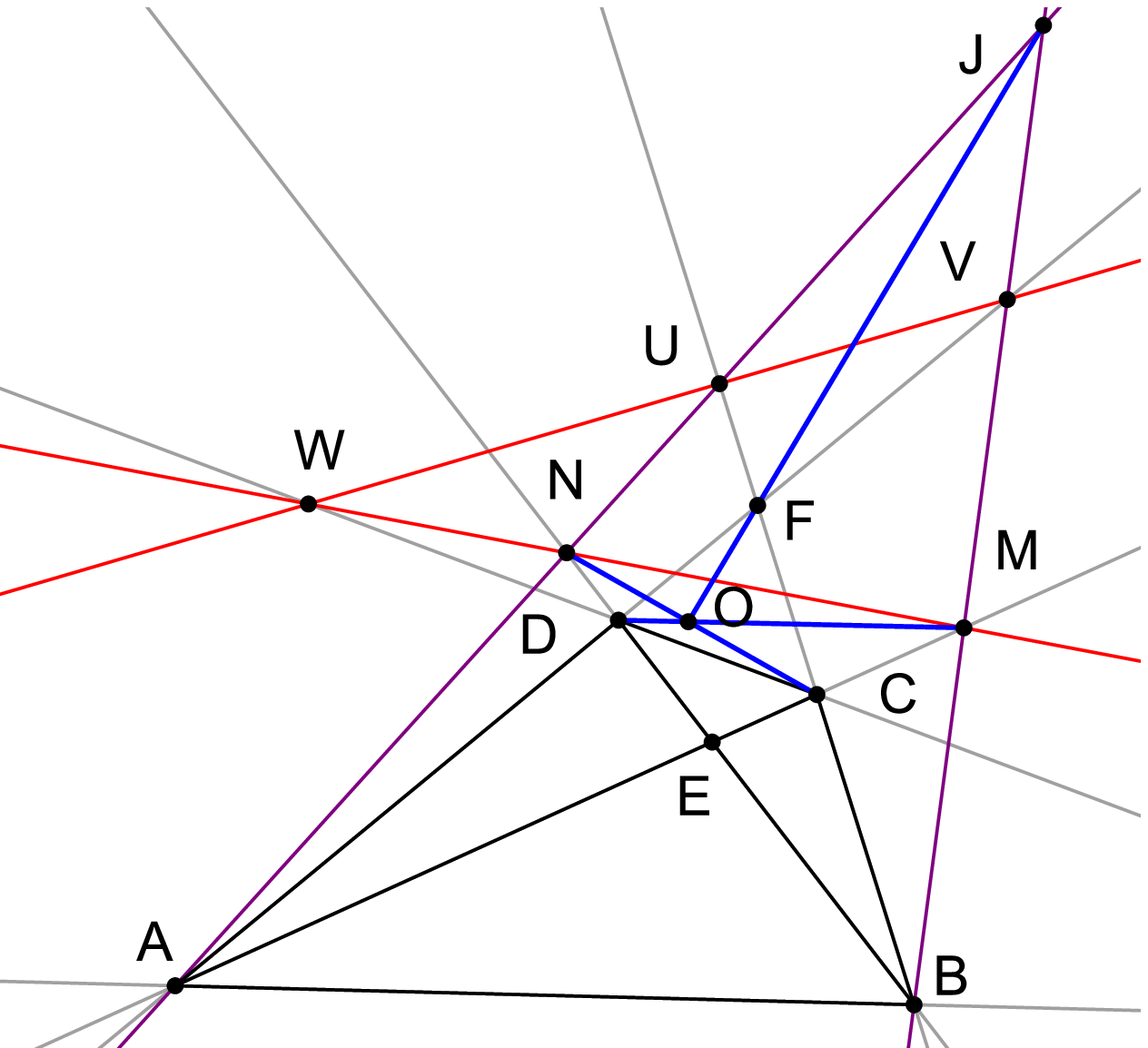,width=0.35\textwidth}\hfill\epsfig{file=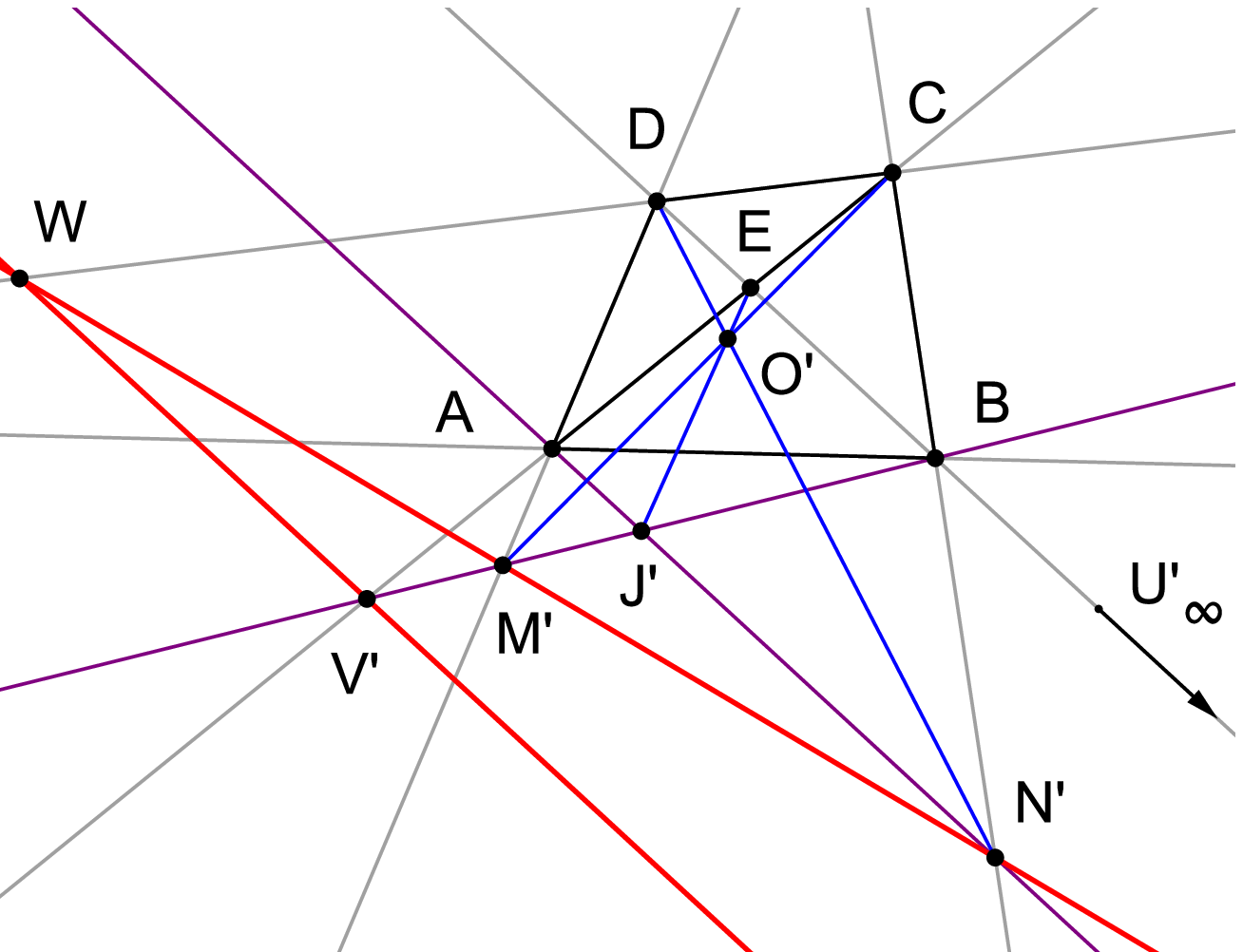,width=0.40\textwidth}
\caption{{\it Left} Sharygin's Problem for finite points $U$ and $V$. {\it Right} Sharygin's Problem for one finite point $V$ and one infinite point $U_\infty$}
\label{fig:6}
\end{figure}
\begin{proof} It is enough to apply Theorem \ref{th1} for the ordered triads of points $(B, C, U)$ and $(A, V , D)$ (Fig. \ref{fig:6}, Left) . 	
\end{proof}
Since there exists two possibilities for the points $U$ and $V$ -- to be finite or infinite, then their possible combinations are four.
We observe that the role of pair of lines $(BC, AD)$ and $(AC, BD)$ can be interchanged in
Problem \ref{pr1} and Problem \ref{pr1}$^\ast$. Let us state another variant of Sharygin's Problem.

\begin{problem}\label{pr3}
Let $ABCD$ be a quadrangle
and $V^\prime\in AC$ be an arbitrary point and $U^\prime_\infty$ is the infinite point on the line $BD$. Let us denote $M^\prime=BV^\prime\cap AD$, $N^\prime=AU^\prime_\infty\cap BC$ . Prove that the lines $CD$, $V^\prime U^\prime_\infty$ and $M^\prime N^\prime$ are concurrent
(Fig. \ref{fig:6}, Right).
\end{problem}
\begin{proof} It is enough to apply Theorem \ref{th1} for the ordered triads of points $(A, C, V^\prime)$ and $(B, U^\prime_\infty , D)$.
\end{proof}

The Dynamic Geometry Softwares facilitate the research work. They can help us to state a hypotheses, which are necessary to be proven after that with synthetic methods or with ACS \cite{RefZ}. Thus with the notations in Fig. \ref{fig:6} to the left we can state a hypothesis: {\it The lines $DM$, $CN$, $FJ$ are concurrent.} Using the notation in Fig. \ref{fig:6} to the right we can state the hypothesis: {\it The lines $DN^\prime$, $CM^\prime$, $EJ^\prime$ are concurrent.} We will prove the first statement. Let us consider the triangles $DCF$ and $MNJ$. From Problem \ref{pr2}
the points $W=DC\cap MN$, $U=CF\cap NJ$ and $V=DF\cap MJ$ are collinear.
According to Theorem \ref{th2} the triangles $DCF$ and $MNJ$  are perspective ones with a perspective axis $UV$. Therefore the connecting lines of the pairs of corresponding vertices i.e. $DM$, $CN$,  $FJ$ are concurrent.

We will replace the quadrangle $ABCD$ with the complete quadrangle $A_1A_2A_3A_4$ in order to be able to state all of the cases in one problem.
\begin{problem}\label{pr4}(Generalized Sharygin's problem)
Let $A_1A_2A_3A_4$ be a complete quadrangle, $A_i$, $A_j$ be arbitrary pair of its vertices and the points $U_{js}\in A_jA_s$ and $U_{ik}\in A_iA_k$ be arbitrary chosen, where $i, j, k, s\in \{1,2,3,4\}$ and any two of them are different. Let us denote $g=U_{js}U_{ik}$, $I=A_iA_k\cap A_jA_s$, $M=A_iU_{js}\cap A_jA_k$, $N=A_jU_{ik}\cap A_iA_s$ and $J=A_iM\cap A_jN$.
Prove that:\\
1) The lines $g$, $MN$, $A_kA_s$ are concurrent;\\
2) The lines $A_sM$, $A_kN$, $IJ$ are concurrent.
\end{problem}
Let us point out that the points $U_{is}$ and $U_{si}$ coincide for any $i,s\in\{1,2,3,4\}$, $i\not= s$.
Therefore we will use the notation $U_{is}$, $i<s$ for all the figures and examples.
\begin{proof}
1) We apply Theorem \ref{th1} to the ordered triads of collinear points $(A_i,A_k,$ $U_{ik})$ and $(A_j, U_{js}, A_s)$ and we get that the points $M$, $N$ and $W=g\cap A_kA_s$ are collinear, i.e. the lines $g$, $MN$, $A_kA_s$ are concurrent.

2) Let us consider the triangles $A_sA_kI$ and $MNJ$. We establish that $A_sI\cap MJ=A_sA_j\cap A_iU_{js}=U_{js}\in g$ and $A_kI\cap NJ=A_kA_i\cap NA_j=A_kA_i\cap A_jU_{ik}=U_{ik}\in g$. From 1) we have $A_kA_s\cap NM=W\in g$. Consequently the triangles $A_sA_kI$ and $MNJ$ are perspective with a perspective axis $g$. According to Theorem \ref{th2} the lines $A_sM$, $A_kN$, $IJ$ are concurrent.
\end{proof}

\begin{definition}\label{d1}
If $A_1A_2A_3A_4$ is a complete quadrangle and the points $U_{js}\in A_jA_s$ and $U_{ik}\in A_iA_k$ are arbitrary chosen, then the points $M=A_iU_{js}\cap A_jA_k$ and $N=A_jU_{ik}\cap A_iA_s$, will be called Sharygin's points for $A_1A_2A_3A_4$, which are associate
with the line $g=U_{js}U_{ik}$ and the vertices $A_i$, $A_j$.
\end{definition}

There are $C^2_4=6$ possible choices for the pairs of vertices $(A_i, A_j)$. For any chosen pair $(A_i, A_j)$ there are two different problems, because of the existence of two mutually exclusive possibilities for the indices $s$ and $k$. The combinations of the points $U_{js}$ and $U_{ik}$
are four, because any one can be finite or infinite. Therefore the number of the specific tasks that can be formulated from Problem \ref{pr4} is $6.2.4=48$.\\
For example Problem \ref{pr1} is a particular case of the problem \ref{pr4} for $i=1$, $j=2$, $s=3$, $k=4$,
where $U_{23}$ and $U_{14}$ are the infinite points of the lines $A_2A_3$ and $A_1A_4$, respectively.

The visualization of the particular cases of Problem \ref{pr4} can be done easily with the help of the special function ``Swap finite and infinite points'' in DGS -- Sam \cite{RefKTZ}. It is enough to sketch the problem for one of four combinations of the points $U_{js}$ and $U_{ik}$ and the other three
are obtained with the help of the function ``Swap finite and infinite points''.

For the convenience of the reader we will state a particular case of Problem \ref{pr4} for $i=1$, $j=3$ in the next problem.
There are two choices for the indices $s$ and $k$: $s=2$, $k=4$ or $s=4$, $k=2$. That is why one can see four different Sharygin's points,
associated with the vertices $A_1$ and $A_3$.

\noindent\textbf{Problem \ref{pr4}$^\ast$}
Let $A_1A_2A_3A_4$ be a complete quadrangle, $A_1$, $A_3$ be a pair of its vertices. Let the finite points $U_{23}\in A_2A_3$, $U_{14}\in A_1A_4$, $U_{34}\in A_3A_4$, $U_{12}\in A_1A_2$ be arbitrary chosen. Let us denote $I=A_1A_4\cap A_3A_2$,  $M=A_1U_{23}\cap A_3A_4$, $N=A_3U_{14}\cap A_1A_2$, $J=A_1M\cap A_3N$, $M^\prime =A_1U_{34}\cap A_3A_2$, $N^\prime =A_3U_{12}\cap A_1A_4$, $J^\prime=A_1 M^\prime\cap A_3N^\prime$ (Fig. \ref{fig:9}) .
Prove that:\\
1) The lines $U_{23}U_{14}$, $MN$, $A_2A_4$ are concurrent; the lines $U_{34}U_{12}$, $M^\prime N^\prime$, $A_2A_4$ are concurrent;\\
2) The lines $A_2M$, $A_4N$, $IJ$  are concurrent; the lines  $A_2N^\prime$, $A_4M^\prime$, $I^\prime J^\prime$ are concurrent.

\begin{figure}
\epsfig{file=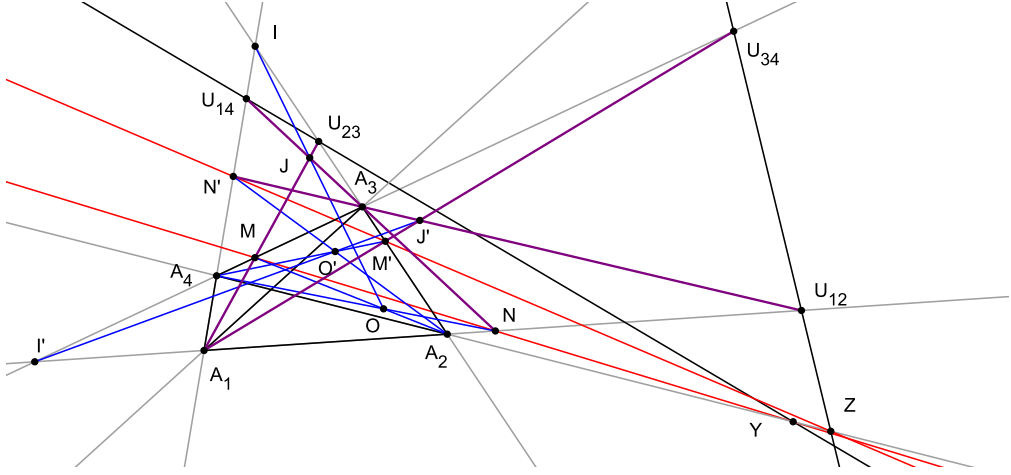,width=0.85\textwidth}
\caption{The four Sharygin's points for $i=1$, $j=3$}
\label{fig:9}
\end{figure}

\section{Sharygin's Conic Sections}
\label{sec:3}

We will put an additional condition on the points $U_{ij}\in A_iA_j$, $i,j\in\{1,2,3,4\}$, $i\not=j$ to be collinear for the next investigations.

\begin{definition}\label{d2}
Let $A_1A_2A_3A_4$ be a complete quadrangle and let $g$ be a line, which lies in the plane of $A_1A_2A_3A_4$ and it is not incident with any
 vertex of $A_1A_2A_3A_4$ or with any of its diagonal points. We call the pair $(A_1A_2A_3A_4,g)$ a $(q,l)$--pair.
\end{definition}

For any $(q,l)$--pair $(A_1A_2A_3A_4,g)$ we will use in the sequel the notation
\begin{equation}\label{e0}
U_{ij}=U_{ji}=g\cap A_iA_j, i,j\in\{1,2,3,4\}, i\not=j.
\end{equation}

According to Theorem \ref{th4} the pairs of points $(U_{12},U_{34})$, $(U_{13},U_{24})$ and $(U_{14},U_{23})$ are pairs of conjugated
points for one and the same involution $\varphi$. The additional condition imposed on the line $g$ not to be incident with a diagonal point
excludes the possibility the Sharygin's points to coincide with vertices of $A_1A_2A_3A_4$.

When the points $U_{ij}$ are collinear it will be easier to introduce a unified notation for the Sharygin's points.
The two Sharygin's points associated with the vertices $A_i$, $A_j$ and the points $U_{js}$, $U_{ik}$ and the
two Sharygin's points associated with the vertices $A_i$, $A_j$ and the points $U_{jk}$, $U_{is}$ are determined
by the pair of vertices $A_i$, $A_j$ and the line $g$. Thus we can denote the four Sharygin's points associated with the pair of vertices
$A_i$, $A_j$ of the $(q,l)$--pair $(A_1A_2A_3A_4,g)$ with
\begin{equation}\label{e1}
\begin{array} {l}
M_{ij}^k =A_iU_{js}\cap A_jA_k, \ M_{ji}^s=A_jU_{ik}\cap A_iA_s, \\[10pt]
M_{ij}^s =A_iU_{jk}\cap A_jA_s, \ M_{ji}^k =A_jU_{is}\cap A_iA_k.
\end{array}
\end{equation}
Four Sharygin's points are connected to any pair of vertices for any $(q,l)$--pair $(A_1A_2A_3A_4,g)$ and they are the intersecting
points of the connecting lines of the vertices $A_i$ and $A_j$ with the pair of conjugated points for the Pappus--Desargues involution,
which is induced by $A_1A_2A_3A_4$ into $g$, with the third pair of opposite sides.
Let us introduce the points
\begin{equation}\label{e2}
\begin{array} {lr}
I=A_iA_k\cap A_jA_s, \ \ \ I^\prime=A_iA_s\cap A_jA_k; \\[10pt]
J=A_i U_{js}\cap A_j U_{ik}, \ J^\prime=A_iU_{jk}\cap A_jU_{is}; \\[10pt]
L=A_k U_{js}\cap A_s U_{ik}, \ L^\prime=A_sU_{jk}\cap A_kU_{is},
\end{array}
\end{equation}
which we will need in the sequel.

\begin{theorem}\label{th6}
Let $(A_1A_2A_3A_4,g)$ be a $(q,l)$--pair. Then the following hold true:\\
1) The lines $M_{ij}^kM_{ji}^s$, $M_{ij}^sM_{ji}^k$, are incident with the point $U_{ks}$;\\
 The lines $M_{sk}^jM_{ks}^i$, $M_{sk}^iM_{ks}^j$, are incident with the point $U_{ij}$.  \\
2) The lines $A_iM_{ks}^j, A_jM_{sk}^i, M_{ij}^kA_s, M_{ji}^sA_k, JI, I^\prime L^\prime$ are concurrent;  \\
   The lines $A_iM_{sk}^j, A_jM_{ks}^i,  M_{ji}^k A_s, M_{ij}^s A_k, IL, J^\prime I^\prime$ are concurrent. \\
3) The lines $M_{ij}^kM_{ij}^s$, $M_{ji}^sM_{ji}^k$, $A_iA_j$, $IJ$, $I^\prime J^\prime$ are concurrent; \\
The lines $M_{sk}^jM_{sk}^i$, $M_{ks}^iM_{ks}^j$, $A_sA_k$, $IL$, $I^\prime L^\prime$,  are concurrent.
\end{theorem}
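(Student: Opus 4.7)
The plan is to deduce all three parts from Desargues' theorem (Theorem \ref{th2}), Pascal's theorem (Theorem \ref{th3}), and Problem \ref{pr4}; the recurring mechanism is that the axes of perspectivity are forced to lie on $g$ by the Pappus--Desargues involution. Part 1) is immediate from Problem \ref{pr4}(1): applied to the vertex pair $(A_i, A_j)$ with the triads $(A_i, A_k, U_{ik})$ and $(A_j, U_{js}, A_s)$, it asserts that $g$, the line $M_{ij}^k M_{ji}^s$, and $A_k A_s$ are concurrent, and the common point is necessarily $g \cap A_k A_s = U_{ks}$. The line $M_{ij}^s M_{ji}^k$ passes through $U_{ks}$ by the same argument applied to the conjugate pair $(U_{jk}, U_{is})$, and interchanging the chosen vertex pair with $(A_k, A_s)$ produces the two remaining incidences through $U_{ij}$.

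For part 2), two applications of Problem \ref{pr4}(2) provide the skeleton: the vertex pair $(A_i, A_j)$ yields the concurrency of $A_s M_{ij}^k$, $A_k M_{ji}^s$, $IJ$ at some point $P$, while $(A_k, A_s)$ yields the concurrency of $A_i M_{ks}^j$, $A_j M_{sk}^i$, $I'L'$ at some point $P'$. To identify $P$ with $P'$ I will apply Desargues' theorem to the triangles $A_s A_k M_{ks}^j$ and $M_{ij}^k M_{ji}^s A_i$, whose three pairs of corresponding sides lie on $(A_k U_{is}, A_i A_s)$, $(A_j A_s, A_i U_{js})$, and $(A_s A_k, M_{ij}^k M_{ji}^s)$ and meet respectively at $U_{is}$, $U_{js}$, $U_{ks}$ (the last by part 1). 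Since these three axis points lie on $g$, the triangles are perspective from a center, whence $A_s M_{ij}^k$, $A_k M_{ji}^s$, $A_i M_{ks}^j$ are concurrent at $P$. The analogous Desargues argument with the triangles $A_s A_k M_{sk}^i$ and $M_{ij}^k M_{ji}^s A_j$ (axis intersections $U_{ik}, U_{jk}, U_{ks}$) gives $P \in A_j M_{sk}^i$. Hence $P = P'$ and all six lines concur.

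Part 3) follows the same template for three of the five lines: Desargues on $A_i I M_{ij}^s$ and $A_j J M_{ij}^k$ has its three corresponding-side intersections at $U_{js}, U_{jk}, U_{ik}$ on $g$, giving the concurrency of $A_i A_j$, $IJ$, $M_{ij}^k M_{ij}^s$; the symmetric triangles $A_j I' M_{ji}^s$ and $A_i J' M_{ji}^k$ give the concurrency of $A_i A_j$, $I'J'$, $M_{ji}^s M_{ji}^k$. For the remaining concurrency of $A_i A_j$, $IJ$, $I'J'$, Desargues on $A_i I I'$ and $A_j J J'$ reduces the task to verifying $II' \cap JJ' \in g$, because the other two axis intersections are already $U_{ik}$ and $U_{is}$. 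I obtain this last fact from Pascal's theorem applied to the hexagon $I I' A_i J J' A_j$ inscribed in the conic through $A_i, A_j, I, I', J, J'$: its three opposite-side intersections are $II' \cap JJ'$, $U_{is}$, $U_{js}$, Pascal makes them collinear, and since $U_{is}$ and $U_{js}$ lie on $g$, the Pascal line is $g$ itself.

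The principal obstacle is establishing that $A_i, A_j, I, I', J, J'$ indeed lie on a common conic, which is what the Pascal step above presupposes. I would prove this by Steiner's projective generation of a conic: the Pappus--Desargues involution $\varphi$ on $g$ given by Theorem \ref{th4}, composed with the perspectivities $T \mapsto A_i T$ and $T \mapsto A_j T$ from $g$ to the pencils at $A_i$ and $A_j$, defines a non-perspective projectivity $A_i T \mapsto A_j \varphi(T)$ between those pencils; its locus of intersections of corresponding lines is a conic through both centers, and evaluation at the conjugate pairs $(U_{ik}, U_{js})$ and $(U_{is}, U_{jk})$ places $I, J, I', J'$ on it. Once the conic is available, the whole theorem reduces to choosing the correct triangles for Desargues, as above.
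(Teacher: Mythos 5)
Your proposal is correct, and parts 1) and 2) are essentially the paper's own argument: part 1) is the same double application of Pappus' theorem, and part 2) is the same strategy of exhibiting several pairs of triangles that are Desargues-perspective with common axis $g$ and then identifying the perspective centers because consecutive pairs share two of the concurrent lines (you use the triangle pairs $A_sA_kM_{ks}^j$, $M_{ij}^kM_{ji}^sA_i$ and $A_sA_kM_{sk}^i$, $M_{ij}^kM_{ji}^sA_j$ where the paper uses $A_sA_kL'$, $M_{ij}^kM_{ji}^sI'$ and $A_iA_jJ$, $M_{ks}^jM_{sk}^iI$, but the mechanism and the verification that all axis points are $U$-points on $g$ are identical).

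In part 3) you genuinely diverge, and your route is heavier than it needs to be. The paper chains three Desargues applications so that each new concurrency shares \emph{two} lines with the previous one: the pair $M_{ij}^kA_jJ$, $M_{ij}^sA_iI$ gives $M_{ij}^kM_{ij}^s\cap A_iA_j\cap IJ=G_{ij}$; the pair $M_{ji}^sA_iJ$, $M_{ji}^kA_jI$ gives a point on $A_iA_j\cap IJ$, hence equal to $G_{ij}$; and the pair $M_{ij}^sA_jJ'$, $M_{ij}^kA_iI'$ gives a point on $M_{ij}^kM_{ij}^s\cap A_iA_j$, hence again $G_{ij}$. No auxiliary conic is needed. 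Your second triangle pair $A_jI'M_{ji}^s$, $A_iJ'M_{ji}^k$ produces the concurrency of $A_iA_j$, $I'J'$, $M_{ji}^sM_{ji}^k$, which shares only the single line $A_iA_j$ with your first concurrency, and this forces you to prove separately that $A_iA_j$, $IJ$, $I'J'$ concur. Your way of doing that -- Steiner's projective generation of the conic through $A_i,A_j,I,I',J,J'$ from the Pappus--Desargues involution $\varphi$ (using that $g$ misses the diagonal point $A_iA_j\cap A_kA_s$, so the projectivity is not a perspectivity), followed by Pascal on $II'A_iJJ'A_j$ and a final Desargues on $A_iII'$, $A_jJJ'$ -- is correct and even somewhat illuminating, but it imports Steiner's theorem, which is outside the toolkit the paper sets up in Section 2. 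What your detour buys is an extra observation of independent interest (those six points are conconic, with $g$ as a Pascal line); what the paper's choice of triangles buys is a proof of 3) by exactly the same two-shared-lines bookkeeping as in 2), with nothing beyond Desargues.
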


\begin{figure}
\epsfig{file=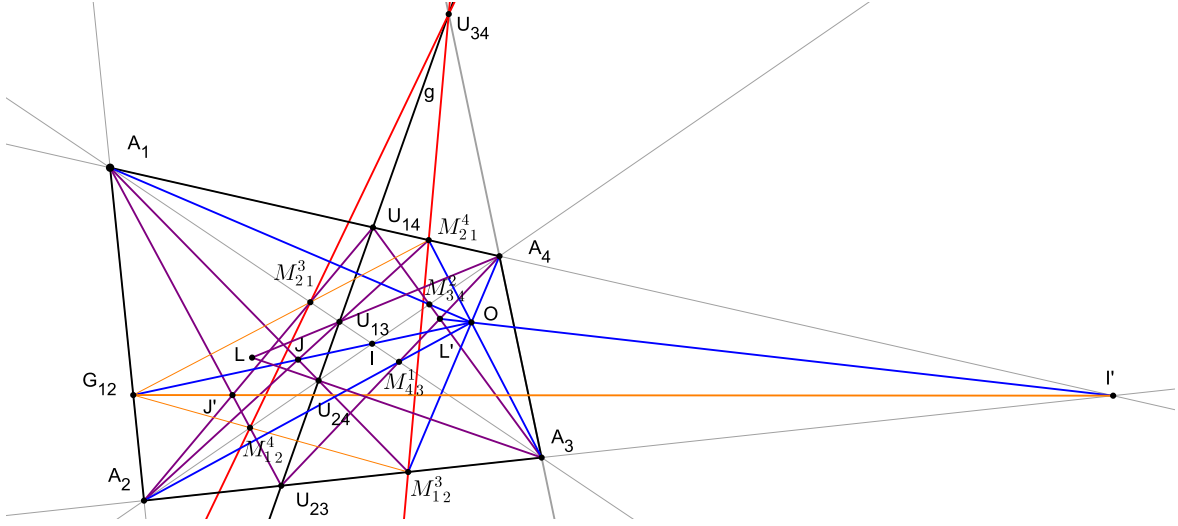,width=0.99\textwidth}
\caption{Theorem \ref{th6} for $i=1$, $j=2$, $k=3$, $s=4$}
\label{fig:11}
\end{figure}

\begin{proof}Just for a convenience of the reader we will write and the Sharygin's points associated with the pair of vertices
$A_s$, $A_k$:
\begin{equation}\label{e22}
\begin{array} {l}
M_{sk}^j=A_sU_{ik}\cap A_jA_k, \ \ M_{ks}^i=A_kU_{js}\cap A_iA_s, \\[10pt]
M_{sk}^i=A_sU_{jk}\cap A_iA_k, \ \ M_{ks}^j=A_kU_{is}\cap A_jA_s.
\end{array}
\end{equation}

1) We apply Theorem \ref{th1} for the ordered triads of points $(A_i,A_k,U_{ik})$ and $(A_j,U_{js},A_s)$ and using (\ref{e2}) we get that the
points $M_{ij}^k$, $M_{ji}^s$ and $A_kA_s\cap U_{ik}U_{js}$ are collinear. Therefore the lines $M_{ij}^kM_{ji}^s$, $A_kA_s$ and $g=U_{ik}U_{js}$ are incident with the point
$U_{ks}$. Applying Theorem \ref{th1} once more for the ordered triads of points $(A_i,A_s,U_{is})$ and $(A_j,U_{jk},A_k)$ and using (\ref{e2}) we get that the
points $M_{ij}^s$, $M_{ji}^k$ and $A_kA_s\cap U_{is}U_{jk}$ are collinear. Therefore the lines $M_{ij}^s M_{ji}^k$, $A_kA_s$ and $g$ are incident with the point $U_{ks}$. With the help of (\ref{e22}) and Theorem \ref{th1}, applied consequently for the ordered triads of points $(A_s,A_j,U_{sj})$ and $(A_k,U_{ik},A_i)$,  $(A_s,A_i,U_{si})$ and $(A_k,U_{jk},A_j)$, we prove that the lines $M_{sk}^jM_{ks}^i$ and $M_{sk}^i M_{ks}^j$ are incident with the point $U_{ij}$.

2) Let us consider the triangles $A_sA_kI$ and $M_{ij}^kM_{ji}^sJ$. Taking into account (\ref{e0}), (\ref{e1}), (\ref{e2}) and 1) in Theorem \ref{th6} we get that the points $A_sA_k\cap M_{ij}^kM_{ji}^s=U_{ks}$, $A_kI\cap M_{ji}^sJ=A_iA_k\cap A_jU_{ik}=U_{ik}$ and $A_sI\cap M_{ij}^kJ=A_jA_s\cap A_iU_{js}=U_{js}$ are collinear. Then from Theorem \ref{th2} it follows that the lines $A_sM_{ij}^k$, $A_kM_{ji}^s$ and $IJ$ are concurrent and let's denote $O=A_sM_{ij}^k \cap A_kM_{ji}^s \cap IJ$. Taking into account (\ref{e0}), (\ref{e1}), (\ref{e2}) and 1) in Theorem \ref{th6} we get again that the triangles $A_s A_k L^\prime$ and $M_{ij}^kM_{ji}^sI^\prime $ are perspective with a perspective axis $g$. Therefore these triangles have a perspective center $O^a=A_sM_{ij}^k \cap A_kM_{ji}^s \cap L^\prime I^\prime$. It is easy to see that $O=O^a$. Using (\ref{e0}), (\ref{e1}), (\ref{e2}), (\ref{e22}) and 1) in Theorem \ref{th6} we establish that the pairs of triangles $A_i A_j J$ and $M_{ks}^j M_{sk}^i I$, $A_i A_j I^\prime$ and $M_{ks}^j M_{sk}^i L^\prime$ are perspective with a perspective axis $g$. Hence $A_i M_{ks}^j \cap A_j M_{sk}^i \cap JI=O^b$ and $A_i M_{ks}^j \cap A_j M_{sk}^i \cap I^\prime L^\prime =O^c$. It is easy to observe that $O^b=O^c$ and then $A_i M_{ks}^j \cap A_j M_{sk}^i \cap I^\prime L^\prime \cap JI =O^c$.
Comparing all results we get $O=O^a=O^b=O^c$ or
\begin{equation}\label{e3}
A_iM_{ks}^j \cap A_jM_{sk}^i \cap A_sM_{ij}^k \cap A_kM_{ji}^s \cap JI \cap I^\prime L^\prime = O .
\end{equation}
 By similar considerations for the pairs of triangles $A_kA_sI^\prime$ and $M_{ij}^s M_{ji}^k J^\prime$, $A_kA_s L$ and  $M_{ij}^s M_{ji}^k I$ we prove that they have a common perspective center $O^\prime=A_k M_{ij}^s \cap  A_s M_{ji}^k \cap I^\prime J^\prime \cap LI$. Repeating the considerations for the pairs of triangles $A_iA_jI$ and $M_{sk}^jM_{ks}^iL$, $A_iA_jJ^\prime$ and $M_{sk}^jM_{ks}^iI^\prime$ we get that they have a common perspective center $\hat{O}=A_i M_{sk}^j \cap  A_j M_{ks}^i \cap IL \cap J^\prime I^\prime $. Comparing these results we can write
\begin{equation}\label{e4}
A_i M_{sk}^j \cap  A_j M_{ks}^i  \cap A_s M_{ji}^k  \cap A_k M_{ij}^s  \cap I^\prime J^\prime \cap LI =O^\prime .
\end{equation}
3) Let us consider the triangles $M_{ij}^kA_jJ$ and $M_{ij}^s A_iI$. Using (\ref{e0}), (\ref{e1}) and (\ref{e2}) we get that the intersecting points $U_{jk}$, $U_{ik}$, $U_{js}$ of their corresponding sides lie on the line $g$. Therefore the triangles $M_{ij}^kA_jJ$ and $M_{ij}^s A_iI$ are perspective with a perspective axis $g$. According to Theorem \ref{th2} the lines $M_{ij}^kM_{ij}^s$, $A_jA_i$, $JI$ are passing through a common point. Let us denote
$ M_{ij}^kM_{ij}^s\cap A_jA_i\cap JI=G_{ij}.$

After similar considerations for the triangles $M_{ji}^sA_iJ$ and $M_{ji}^k A_jI$ we prove that they are perspective with a perspective axis $g$. According to Theorem \ref{th2} they have a  perspective center and we can write $M_{ji}^sM_{ji}^k\cap A_iA_j\cap JI=\widehat{G}_{ij}.$
Therefore $G_{ij}=\widehat{G}_{ij}$.

At the end let us consider the triangles $M_{ij}^s A_jJ^\prime$ and $M_{ij}^kA_iI^\prime$. They are perspective again with a perspective axis $g$. Therefore the connecting lines $M_{ij}^kM_{ij}^s$, $A_iA_j$, $J^\prime I^\prime$ of their corresponding vertices are concurrent lines and it is easy to see they are passing through the point $G_{ij}$. Thus we received
\begin{equation}\label{e5}
M_{ij}^kM_{ij}^s\cap M_{ji}^sM_{ji}^k  \cap A_iA_j\cap JI \cap J^\prime I^\prime=G_{ij}.
\end{equation}
Now with the help of the pairs of triangles $M_{sk}^j A_s I^\prime$ and $M_{sk}^i A_k L^\prime$, $M_{ks}^i A_sI^\prime$ and $M_{ks}^j A_k L^\prime$,
$M_{sk}^i A_s I$ and $M_{sk}^j A_k L$, (\ref{e0}), (\ref{e1}), (\ref{e2}) and (\ref{e22}) we obtain
\begin{equation}\label{e6}
M_{sk}^jM_{sk}^i\cap M_{ks}^iM_{ks}^j  \cap A_sA_k\cap LI \cap L^\prime I^\prime=G_{sk}.
\end{equation}
\end{proof}

We present a particular case of Theorem \ref{th6} for $i=1$, $j=2$, $k=3$, $s=4$ on Fig. \ref{fig:11}.

By Theorem \ref{th6} it follows that for any given $(q,l)$--pair $(A_1A_2A_3A_4, g)$ there exists one point $G_{ij}\in A_iA_j$ for every side $A_iA_j$
with the properties (\ref{e5}).

\begin{definition}\label{d3}
Let $(A_1A_2A_3A_4,g)$ be a $(q,l)$--pair. The intersection point of the line $A_iA_j$ with
the line $M_{ij}^sM_{ij}^k$ will be denoted with $G_{ij}$ and will be called a $G_{ij}$--point for the
pair $(A_1A_2A_3A_4,g)$, associated with the vertices $A_i$ and $A_j$.
\end{definition}

\begin{proposition}\label{prop1}
The $G_{ij}$--point is a harmonic conjugate point of the point $U_{ij}$ with respect to the pair $A_i, A_j$.
\end{proposition}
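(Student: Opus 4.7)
The plan is to exhibit an auxiliary complete quadrangle whose diagonal triangle has $U_{ij}$ as the diagonal point on $A_iA_j$, and whose opposite diagonal side meets $A_iA_j$ precisely at $G_{ij}$; then the classical harmonic property of a complete quadrangle finishes the job.

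Concretely, I would consider the four points $A_i,A_j,U_{js},U_{ik}$. They form a genuine complete quadrangle because the hypothesis that $g$ is not incident with any vertex of $A_1A_2A_3A_4$ prevents any three of them from being collinear. I would then compute its three pairs of opposite sides: one pair is $A_iA_j$ and $U_{js}U_{ik}=g$; a second pair is $A_iU_{js}$ and $A_jU_{ik}$; the third pair is $A_iU_{ik}$ and $A_jU_{js}$, and here I would use the equality $A_iU_{ik}=A_iA_k$ and $A_jU_{js}=A_jA_s$ (which hold because $U_{ik}\in A_iA_k$ and $U_{js}\in A_jA_s$, and neither coincides with $A_i$ or $A_j$ by the $(q,l)$-pair assumption). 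With this identification, the three diagonal points of the quadrangle $A_iA_jU_{js}U_{ik}$ are, using notation (\ref{e2}),
\[
A_iA_j\cap g=U_{ij},\qquad A_iU_{js}\cap A_jU_{ik}=J,\qquad A_iA_k\cap A_jA_s=I.
\]

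Now I would invoke the classical harmonic property of the complete quadrangle (which is essentially the content of Theorem~\ref{th4} specialised to a line through one diagonal point): on each side of a complete quadrangle, the two vertices together with the diagonal point lying on that side and the intersection of the opposite diagonal-triangle side form a harmonic quadruple. Applied to the side $A_iA_j$ of the quadrangle $A_iA_jU_{js}U_{ik}$, this says that the line $JI$ cuts $A_iA_j$ at the harmonic conjugate of $U_{ij}$ with respect to $A_i,A_j$.

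To conclude, I would quote Theorem~\ref{th6}, specifically equation~(\ref{e5}), which asserts that $JI\cap A_iA_j=G_{ij}$. Combining the two statements gives $(A_i,A_j;U_{ij},G_{ij})=-1$, as required. The only real step that needs care is verifying that the four chosen points genuinely form a complete quadrangle and correctly matching up the three pairs of opposite sides; once that is set up, the classical harmonic theorem and the already-proved formula for $G_{ij}$ do all the work, so no separate computation is needed.
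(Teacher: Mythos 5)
Your proof is correct, and it is a close cousin of --- but not identical to --- the paper's argument. The paper also reduces the statement to the classical harmonic property of a complete quadrangle, but it chooses the quadrangle $M_{ij}^sM_{ij}^kU_{js}U_{jk}$, for which $A_i$ and $A_j$ are two of the three diagonal points; the pair of opposite sides through the third diagonal point, namely $M_{ij}^sM_{ij}^k$ and $g=U_{js}U_{jk}$, cut the diagonal $A_iA_j$ in $G_{ij}$ and $U_{ij}$, so the harmonic property of the diagonal triangle gives $H(G_{ij}U_{ij},A_iA_j)$ using only Definition \ref{d3} of $G_{ij}$. You instead take $A_i$ and $A_j$ as \emph{vertices} of the quadrangle $A_iA_jU_{js}U_{ik}$, identify its diagonal points as $U_{ij}$, $J$ and $I$, and apply the ``side'' form of the harmonic property together with the already-proved incidence $JI\cap A_iA_j=G_{ij}$ from (\ref{e5}). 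Both routes are sound; the paper's is marginally more economical in that it needs only the definition of $G_{ij}$, while yours leans on the full strength of Theorem \ref{th6}, which is of course available at this point. One small caveat: your parenthetical claim that the harmonic property you use is Theorem \ref{th4} ``specialised to a line through one diagonal point'' is not quite accurate, since the line you apply it to, $A_iA_j$, passes through two vertices of the quadrangle $A_iA_jU_{js}U_{ik}$ and so falls outside the hypotheses of Theorem \ref{th4} as stated; it is cleaner to cite the classical harmonic property of the complete quadrangle directly (as the paper does, via \cite{RefC1}).
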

\begin{proof}
Let us consider the complete quadrangle $M_{ij}^sM_{ij}^k U_{js}U_{jk}$ which vertices are defined by (\ref{e0}) and (\ref{e1}). The points $A_j=M_{ij}^sU_{js}\cap M_{ij}^k U_{jk}$ and $A_i=M_{ij}^sU_{jk}\cap M_{ij}^k U_{js}$ are diagonal points for $M_{ij}^sM_{ij}^k U_{js}U_{jk}$. From (\ref{e0}) and (\ref{e5})
it follows that the pair of opposite sides $M_{ij}^sM_{ij}^k$ and $U_{js}U_{jk}$, which are passing through the
third diagonal point, intersect the side $A_iA_j$ in the points $G_{ij}$ and $U_{ij}$, respectively.
So, following \cite{RefC1} we can write the harmonic group $H(G_{ij}U_{ij}, A_i A_j).$
\end{proof}
Theorem \ref{th6} holds true also if the line $g$ is replaced with the infinite line $\omega$.
In this case the points $U_{ij}$ are infinite points wherefore the points $G _{ij}$ are midpoints for the Euclidean segments $A_iA_j$.
The new element in the proof of Theorem \ref{th6} will be a simplification in the proof of 3). Indeed, now the quadrangles
$A_iJA_jI$, $A_iJ^\prime A_jI^\prime$, $A_iM_{ij}^sA_jM_{ij}^k$, $A_iM_{ji}^sA_jM_{ji}^k$ are parallelograms and it follows that the segments $IJ$,
$I^\prime J^\prime$, $M_{ij}^sM_{ij}^k$, $M_{ji}^sM_{ji}^k$ and $A_iA_j$ have a common midpoint $G_{ij}$.
The case for $i=1$ and $j=3$ is presented in Fig. \ref{fig:122} to the right.
\begin{proposition}\label{prop2}
Any two vertices and the four  Sharygin's points that are connected with them lie on a conic section.
\end{proposition}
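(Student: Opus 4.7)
The plan is to apply the converse direction of Pascal's Theorem (Theorem \ref{th3}) to a suitable ordering of the six points as a hexagon. The six points are $A_i, A_j$ together with the four Sharygin's points $M_{ij}^k, M_{ji}^s, M_{ij}^s, M_{ji}^k$. I would arrange them as the hexagon
$$A_i \,\to\, M_{ij}^s \,\to\, M_{ji}^k \,\to\, A_j \,\to\, M_{ij}^k \,\to\, M_{ji}^s \,\to\, A_i,$$
so that the three pairs of opposite sides are $(A_iM_{ij}^s,\ A_jM_{ij}^k)$, $(M_{ij}^sM_{ji}^k,\ M_{ij}^kM_{ji}^s)$ and $(M_{ji}^kA_j,\ M_{ji}^sA_i)$.

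Next I would identify each of these three intersection points directly from the definitions (\ref{e1}) and from part 1) of Theorem \ref{th6}. For the first pair, the line $A_iM_{ij}^s$ is the line $A_iU_{jk}$ (since $M_{ij}^s=A_iU_{jk}\cap A_jA_s$), while $A_jM_{ij}^k$ is the line $A_jA_k$ (since $M_{ij}^k\in A_jA_k$); as $U_{jk}\in A_jA_k$, their intersection is $U_{jk}$. Symmetrically, the third pair of opposite sides meets in $U_{is}$. For the middle pair, Theorem \ref{th6} 1) tells us directly that both $M_{ij}^sM_{ji}^k$ and $M_{ij}^kM_{ji}^s$ pass through the point $U_{ks}$, so their intersection is $U_{ks}$.

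Finally I would observe that the three points $U_{jk}$, $U_{ks}$, $U_{is}$ obtained above all lie on the line $g$ by the defining relation (\ref{e0}), hence they are collinear. The converse part of Theorem \ref{th3} then yields that the six vertices of the hexagon are inscribed in a conic section, which is exactly the claim.

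I do not anticipate a real obstacle here beyond picking the correct cyclic order on the six points; once the order above is chosen, the three intersections of opposite sides automatically land on $g$, because two of them come from the trivial incidence of $U_{jk}$ and $U_{is}$ with sides of the complete quadrangle, and the third is precisely the content of Theorem \ref{th6} 1). It is worth noting that the argument is symmetric in the two Sharygin configurations associated with the pair $\{A_i,A_j\}$, which is why all four Sharygin's points (not just two of them) end up on the same conic through $A_i$ and $A_j$.
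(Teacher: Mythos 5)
Your proof is correct. Both you and the paper prove the statement by applying the converse direction of Pascal's Theorem (Theorem \ref{th3}) to a hexagon on the six points, but you choose a different cyclic order, and this changes which collinearity does the work. The paper takes the hexagon $A_iM_{ij}^kM_{ij}^s A_jM_{ji}^sM_{ji}^k$, whose opposite sides meet in $J$, $G_{ij}$ and $I$; the required collinearity of these three points is exactly part 3) of Theorem \ref{th6}, i.e.\ the concurrence defining the $G_{ij}$--point, so the Pascal line of the paper's hexagon is $IJ$. Your hexagon $A_iM_{ij}^sM_{ji}^kA_jM_{ij}^kM_{ji}^s$ instead has Pascal line $g$ itself: two of the three intersection points ($U_{jk}$ and $U_{is}$) fall on $g$ for purely definitional reasons via (\ref{e0}) and (\ref{e1}), and the third ($U_{ks}$) is supplied by part 1) of Theorem \ref{th6}. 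Your identifications of the three opposite-side intersections check out. The net effect is that your argument has lighter prerequisites --- it bypasses part 3) of Theorem \ref{th6} entirely and uses only the easy Pappus-type statement 1) --- while the paper's choice ties the conic directly to the point $G_{ij}$, which it then exploits in Proposition \ref{prop3} (the self-polar triangle argument showing $G_{ij}$ is the pole of $g$). Either route is a complete proof of the proposition as stated.
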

\begin{proof}
Let us consider the hexagon $A_iM_{ij}^kM_{ij}^s A_jM_{ji}^sM_{ji}^k$. From (\ref{e1}), (\ref{e2}) and 3) in Theorem \ref{th6}  it follows that the points
$A_iM_{ij}^k\cap A_jM_{ji}^s=A_iU_{js}\cap A_jU_{ik}=J$, $M_{ij}^kM_{ij}^s \cap M_{ji}^sM_{ji}^k=G_{ij}$ and $M_{ij}^s A_j\cap M_{ji}^kA_i=
A_jA_s\cap A_iA_k=I$
are collinear. Then according to Theorem \ref{th3} the hexagon $A_iM_{ij}^sM_{ij}^s A_jM_{ji}^sM_{ji}^k$ is inscribed in a conic section.
\end{proof}
\begin{figure}
 \epsfig{file=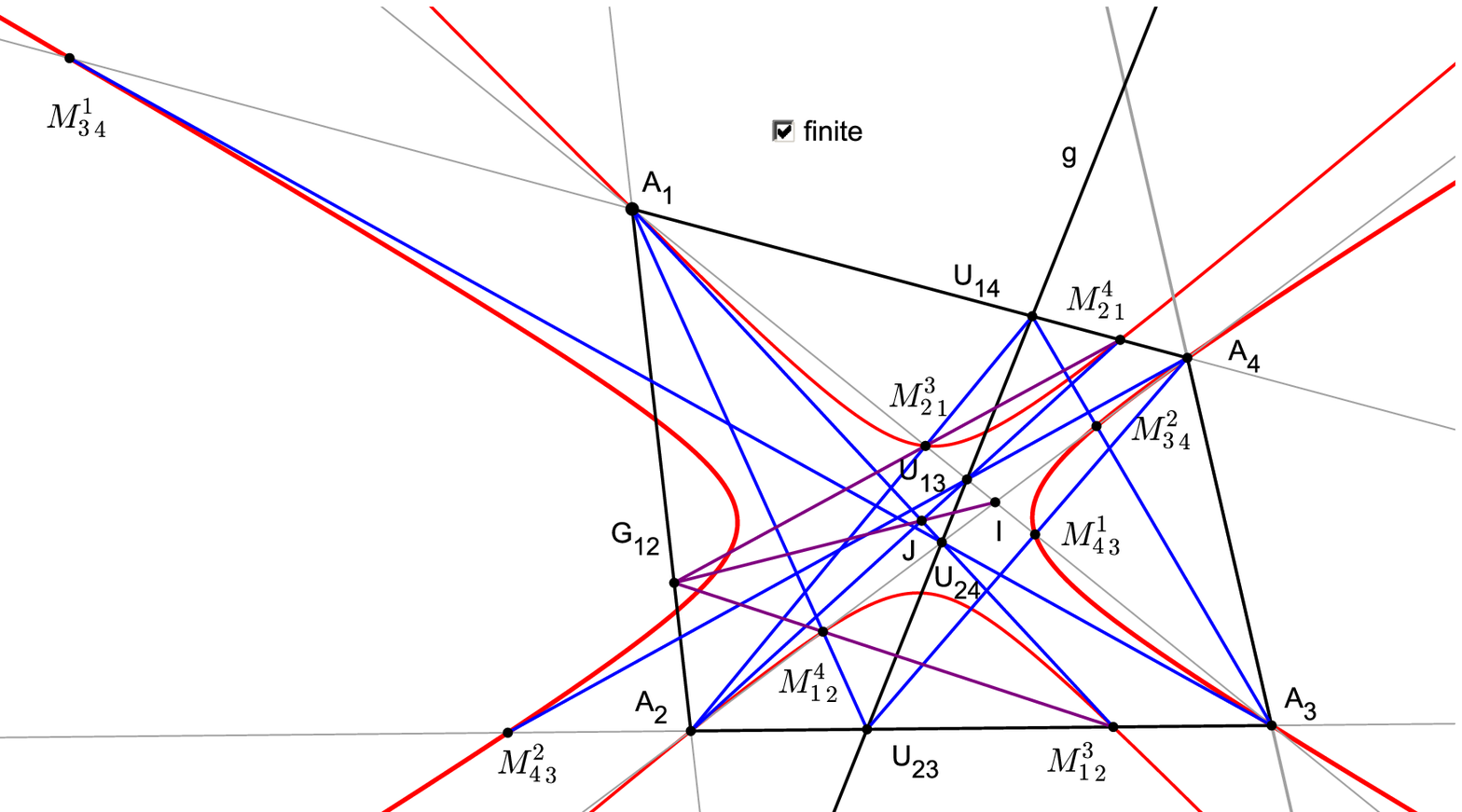,width=0.64\textwidth}\hfill\epsfig{file=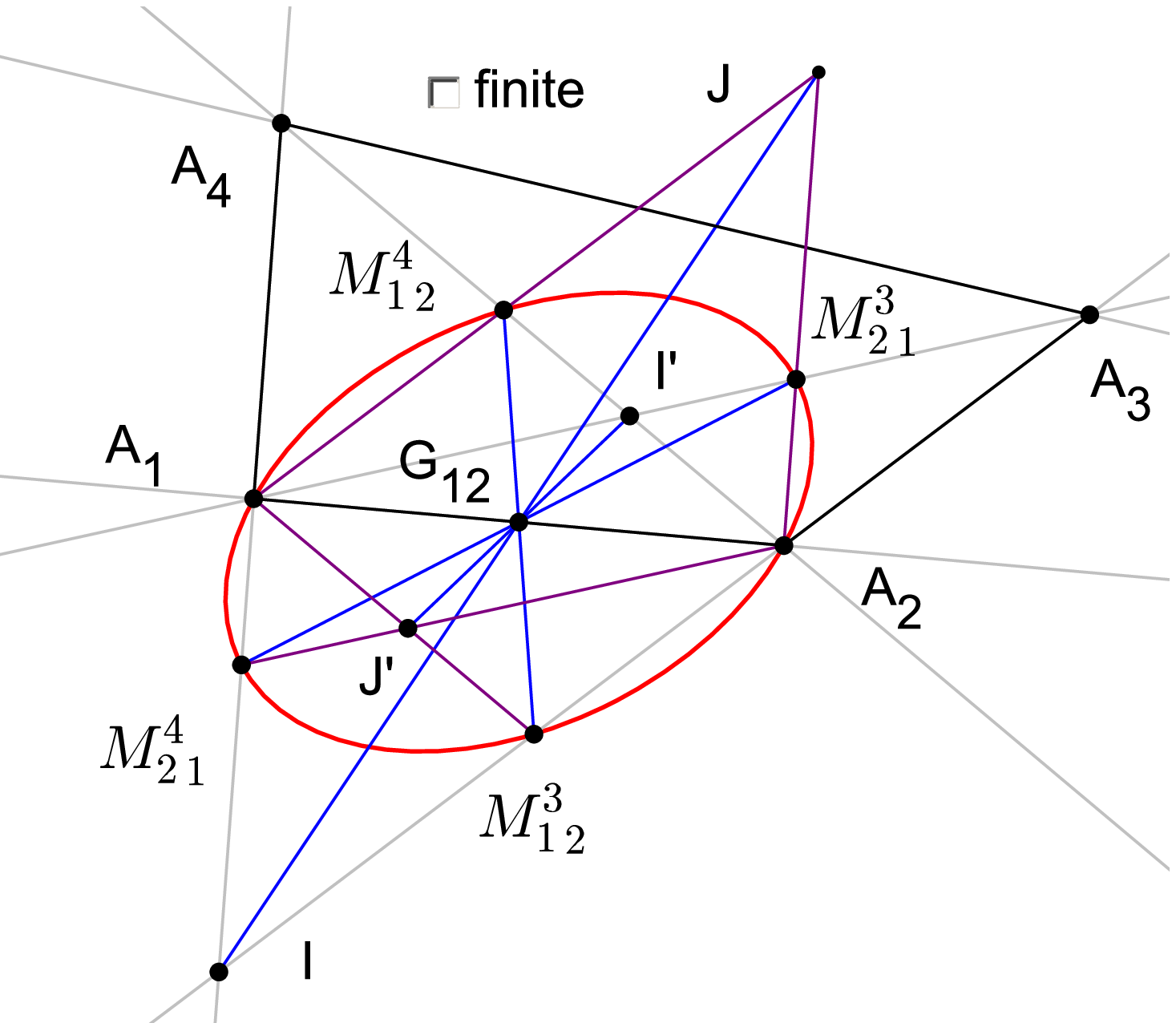,width=0.34\textwidth}
\caption{{\it Left} Sharygin's curves $k_{12}$ and $k_{34}$, when $g$ is a finite line. {\it Right} Sharygin's curve $k_{12}$, when $g$ is the infinite line $\omega$.}
\label{fig:122}
\end{figure}

\begin{definition}\label{d4}
The conic section, which passes through the vertices $A_i$ and $A_s$ of the complete quadrangle $A_1A_2A_3A_4$ from the $(q,l)$--pair
$(A_1A_2A_3A_4, g)$ and through the four Sharygin's points $M_{is}^j$, $M_{is}^k$, $M_{si}^j$, $M_{si}^k$ associated with these vertices will be called Sharygin's curve for the $(q,l)$--pair $(A_1A_2A_3A_4, g)$ and will be denoted with $k_{is}$.
\end{definition}

From Proposition \ref{prop2} it follows that there are six Sharygin's curves for any $(q,l)$--pair $(A_1A_2A_3A_4,g)$.
Fig. \ref{fig:122} to the left presents the Sharygin's curves $k_{12}$ and $k_{34}$, when $g$ is a finite line.
Fig. \ref{fig:122} to the right presents the Sharygin's curve $k_{12}$, when $g$ is the infinite line $\omega$.

We have presented a technique for GeoGebra to simulate the special function ``Swap finite and infinite points'' of DGS -- Sam in \cite{RefZ}.
With the help of this technique Fig. \ref{fig:122} to the right, after removing
the Sharygin's curve $k_{34}$ just for simplicity of notations, can be generated from Fig. \ref{fig:122} to the left and vice versa.

\begin{proposition}\label{prop3}
The  $G_{ij}$ --point is the pole of the line $g$ with respect to the  Sharygin's curve $k_{ij}$ for the $(q,l)$--pair $(A_1A_2A_3A_4, g)$.
\end{proposition}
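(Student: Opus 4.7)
The plan is to invoke Theorem \ref{th5} on a judiciously chosen complete quadrangle inscribed in $k_{ij}$. Recall that $k_{ij}$ passes through the six points $A_i$, $A_j$, $M_{ij}^k$, $M_{ij}^s$, $M_{ji}^k$, $M_{ji}^s$. Among the inscribed quadrangles available, I would pick the one whose diagonal triangle can be identified directly from the definitions, namely the complete quadrangle with vertices $A_i, A_j, M_{ij}^k, M_{ij}^s$.

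For this quadrangle I would compute the three pairs of opposite sides and intersect them. The pair $A_iA_j$ and $M_{ij}^kM_{ij}^s$ meets at $G_{ij}$ by Definition \ref{d3}. For the pair $A_iM_{ij}^k$ and $A_jM_{ij}^s$: by (\ref{e1}) the line $A_iM_{ij}^k$ is $A_iU_{js}$, while $M_{ij}^s\in A_jA_s$ forces $A_jM_{ij}^s = A_jA_s$, and since $U_{js}=g\cap A_jA_s$ lies on both lines, the intersection is $U_{js}$. Symmetrically, $A_iM_{ij}^s=A_iU_{jk}$ and $A_jM_{ij}^k = A_jA_k$ meet at $U_{jk}$. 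Hence the diagonal triangle of the inscribed quadrangle is $G_{ij}U_{js}U_{jk}$.

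Since $U_{js}$ and $U_{jk}$ both lie on $g$ by (\ref{e0}), the side of the diagonal triangle opposite to the vertex $G_{ij}$ is precisely the line $g$. By Theorem \ref{th5} the diagonal triangle of an inscribed complete quadrangle is self-polar with respect to the conic; in particular $G_{ij}$ is the pole of $g$ with respect to $k_{ij}$, which is the claim.

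The only real step of ingenuity is the choice of the inscribed quadrangle. Other choices, such as $M_{ij}^kM_{ij}^sM_{ji}^kM_{ji}^s$, also produce a diagonal triangle containing $G_{ij}$ and points on $g$ (using Theorem \ref{th6}(1) and (3)), but then one of the diagonal points is harder to pin down on $g$ without extra work. With $A_iA_jM_{ij}^kM_{ij}^s$ the identifications of the three diagonal points are immediate from (\ref{e0}), (\ref{e1}) and Definition \ref{d3}, so Theorem \ref{th5} closes the argument with essentially no further computation. Once this quadrangle is selected, no obstacle remains.
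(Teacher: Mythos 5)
Your proof is correct and follows essentially the same route as the paper: the author also takes the inscribed complete quadrangle $A_iM_{ij}^kM_{ij}^sA_j$, identifies its diagonal triangle as $U_{js}U_{jk}G_{ij}$ via (\ref{e0}), (\ref{e1}) and Definition \ref{d3}, and concludes by the self-polarity of Theorem \ref{th5}. No discrepancies to report.
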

\begin{proof}
Let us consider the inscribed into the conic section $k_{ij}$ complete quadrangle $A_i M_{ij}^k M_{ij}^s A_j$. Using (\ref{e0}), (\ref{e2})
and 3) in Theorem \ref{th6} we find that the diagonal triangle of $A_i M_{ij}^k M_{ij}^s A_j$ have the following vertices  $U_{js}=A_iM_{ij}^k\cap A_jM_{ij}^s =A_iU_{js}\cap A_jA_s$, $U_{jk}=AM_{ij}^s\cap A_jM_{ij}^k=A_iU_{jk}\cap A_jA_k$, $G_{ij}=A_iA_j\cap M_{ij}^kM_{ij}^s$.
According to Theorem \ref{th5} this triangle is self--polar. Therefore the pole of the line $g=U_{js}U_{jk}$ with respect to $k_{ij}$
is the point $G_{ij}$.
\end{proof}

\begin{proposition}\label{prop4}
Let $(A_1A_2A_3A_4, g)$ be a $(q,l)$--pair. If the points $A_i, A_j$ and $A_j,A_s$  are two pairs of the vertices of $A_1A_2A_3A_4$
and $G_{ij}\in A_iA_j$ and $G_{js}\in A_jA_s$ are the poles of the line $g$ with respect to Sharygin's curves $k_{ij}$ and $k_{js}$,
respectively, then the line $G_{ij}G_{js}$ passes through the point $U_{is}=g\cap A_iA_s$.
\end{proposition}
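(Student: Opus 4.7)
My plan is to reduce the statement to a single well-chosen cross-ratio identity, using Proposition~\ref{prop1} to rewrite the two $G$--points as harmonic conjugates and then exploiting the fact that $U_{is}$ simultaneously lies on $g$ and on the side $A_iA_s$.

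First I would record what Proposition~\ref{prop1} gives: $(A_i,A_j;U_{ij},G_{ij})=-1$ and $(A_j,A_s;U_{js},G_{js})=-1$. The line $A_iA_s$ meets $g$ at $U_{is}$, and this point is therefore simultaneously incident with $A_iA_s$, with $g$, and (by construction) it is the natural projection center to use: if I project centrally from $U_{is}$ the line $A_iA_j$ onto the line $A_jA_s$, then $A_i\mapsto A_s$ (because $U_{is}A_i = A_iA_s$ hits $A_jA_s$ at $A_s$), $A_j\mapsto A_j$ (fixed as the common point of the two lines), and $U_{ij}\mapsto U_{js}$ (because $U_{is}U_{ij}=g$ meets $A_jA_s$ at $U_{js}$).

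Next, let $X=U_{is}G_{ij}\cap A_jA_s$ be the image of $G_{ij}$ under this central projection. Since a central projection between two lines preserves cross-ratio, I obtain
\[
(A_s,A_j;U_{js},X)=(A_i,A_j;U_{ij},G_{ij})=-1,
\]
and hence, using the invariance of the harmonic cross-ratio under the swap of the first two entries, $(A_j,A_s;U_{js},X)=-1$. Comparing with the characterization of $G_{js}$ recalled above, this forces $X=G_{js}$. By the very definition of $X$ it lies on the line $U_{is}G_{ij}$, so $G_{ij}$, $G_{js}$ and $U_{is}$ are collinear, which is the claim.

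The main subtlety I anticipate is just bookkeeping with the indices and checking that the three ``coincidences'' used above ($U_{is}\in A_iA_s$, $U_{ij}\in g$, $U_{js}\in g$) all follow cleanly from the definition of a $(q,l)$--pair and (\ref{e0}); no harder ingredient than Proposition~\ref{prop1} and the projective invariance of the cross-ratio is needed. As a sanity check, in the affine specialization $g=\omega$ the three $U_{ij}$ are at infinity and the three $G_{ij}$ are the Euclidean midpoints of the sides, so the assertion reduces to the classical midline theorem that the midpoint segment of $A_iA_j$ and $A_jA_s$ is parallel to $A_iA_s$; this gives a reassuring cross-check on the general projective argument.
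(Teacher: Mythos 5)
Your proof is correct, and it takes a genuinely different route from the paper. The paper proves Proposition~\ref{prop4} with Desargues' theorem (Theorem~\ref{th2}): it introduces the auxiliary points $\overline{I}$, $\check{I}$, $\overline{J}$, $\check{J}$, uses the characterization (\ref{e5}) of the $G$--points from part 3) of Theorem~\ref{th6} to realize $G_{ij}$ and $G_{js}$ as vertices of the triangles $G_{ij}A_iJ^\prime$ and $G_{js}A_s\check{J}$, checks that these triangles are perspective from a line, and identifies the perspective center as $U_{is}$. You instead use only Proposition~\ref{prop1} (the harmonic group $H(G_{ij}U_{ij},A_iA_j)$) together with the invariance of the harmonic relation under the perspectivity centered at $U_{is}$ mapping $A_iA_j$ onto $A_jA_s$; the key observation that this perspectivity sends $A_i\mapsto A_s$, $A_j\mapsto A_j$ and $U_{ij}\mapsto U_{js}$ is exactly right, and the uniqueness of the harmonic conjugate then forces the image of $G_{ij}$ to be $G_{js}$. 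Your argument is shorter and more transparent about \emph{why} the collinearity holds, and it avoids the extra apparatus of (\ref{e7})--(\ref{e8}); the paper's Desargues argument, on the other hand, yields as a by-product an additional concurrence (the line $J^\prime\check{J}$ also passes through $U_{is}$), which fits the paper's broader program of cataloguing concurrent lines. Two small points worth making explicit in a final write-up: the perspectivity is well defined because $g$ misses the vertices, so $U_{is}$ lies on neither $A_iA_j$ nor $A_jA_s$; and the $G$--points in the statement are introduced as poles of $g$ with respect to the Sharygin curves, so you should cite Proposition~\ref{prop3} to identify them with the $G_{ij}$--points of Definition~\ref{d3} before invoking Proposition~\ref{prop1}.
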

\begin{proof}
Let us first introduce the notations:
\begin{equation}\label{e7}
\begin{array}{lr}
\overline{I}=A_iA_j \cap A_sA_k \ , \ \ \ \check{I}=A_iA_s \cap A_jA_k =I^{\prime} \ , \\[10pt]
\overline{J}=A_jU_{sk} \cap A_sU_{ij} \ , \ \ \ \check{J}= A_j U_{is} \cap A_s U_{jk}\ .
\end{array}
\end{equation}
From Theorem \ref{th6} it follows that we can define the point $G_{js}$ (a pole of $g$ with respect to Sharigin's curve $k_{js}$) by the
following way:
\begin{equation}\label{e8}
G_{js} = \overline{I}\ \overline{J} \cap A_jA_s \cap \check{I} \check{J}=
\overline{I}\ \overline{J} \cap A_jA_s \cap I^\prime \check{J}.
\end{equation}
Let us consider the triangles $G_{ij}A_iJ^\prime$ and $G_{js}A_s \check{J}$.
Using (\ref{e1}), (\ref{e7}) and (\ref{e8}) we find the intersection points of the pairs of corresponding sides:
$G_{ij}A_i\cap G_{js}A_s=A_iA_j\cap A_jA_s=A_j$,
$G_{ij}J^\prime \cap G_{js} \check{J}=I^\prime J^\prime \cap I^\prime \check{J}=I^\prime,$
$A_iJ^\prime \cap A_s \check{J}= A_i U_{jk} \cap A_s U_{jk}= U_{jk}.$
Since they lie on the side $A_jA_k$ of the complete quadrangle $A_1A_2A_3A_4$, the triangles $G_{ij}A_iJ^\prime$ and $G_{js}A_s \check{J}$ are perspective and according to Theorem \ref{th2} the connecting lines of the pairs of corresponding vertices
$G_{ij}G_{js}$, $A_jA_s$ and $J^\prime \check{J}$ are incident with one point-- the perspective center. With the help of (\ref{e0}), (\ref{e1}) and
(\ref{e7})we get $A_iA_s\cap J^\prime \check{J} = A_iA_s\cap A_jU_{is}=U_{is}$, from where it follows that the perspective center is the point $U_{is}$. Therefore the line $G_{ij}G_{js}$ passes through the point $U_{is}$.
\end{proof}
\begin{figure}
\epsfig{file=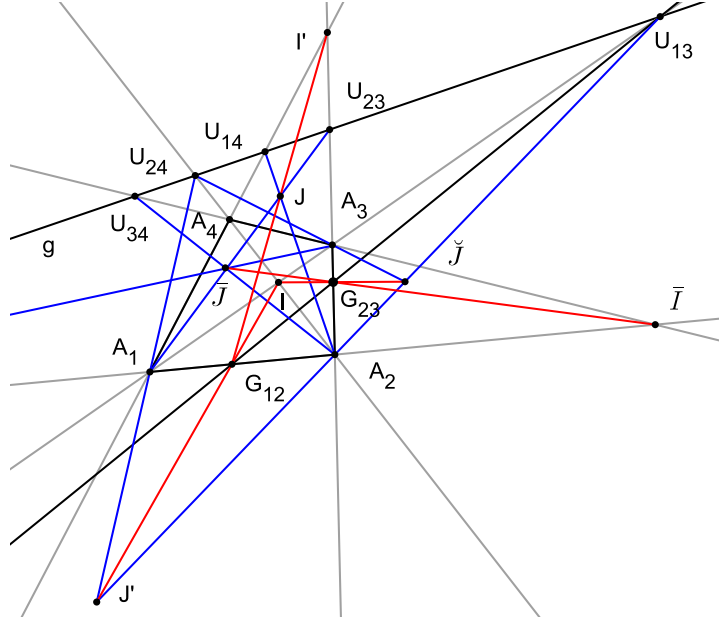,width=0.60\textwidth}
\caption{Proposition \ref{prop4} for $G_{12}$ and $G_{23}$}
\label{fig:15}
\end{figure}
A particular case of Proposition \ref{prop4} is presented on Fig. \ref{fig:15} for  $i=1$, $j=2$, $s=3$ and $k=4$.

Proposition \ref{prop4} presents us an easier technique for finding the remaining five $G_{ij}$--points,
once we have constructed one of the six points $G_{ij}$.

\begin{theorem}\label{th7}
Let $(A_1A_2A_3A_4, g)$ be a $(q,l)$--pair. Then the poles $G_{is}$ of the line $g$ with respect to the Sharygin's curves $k_{is}$, $i,s \in \{1,2,3,4\}$  $i\neq s$,  and the diagonal points of the quadrangle $A_1A_2A_3A_4$ lie on one conic section .
\end{theorem}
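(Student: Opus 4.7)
I plan to prove the theorem in two stages: first, the six $G$-points are shown to lie on a conic $\mathcal{C}$ via the converse of Pascal's theorem; second, each of the three diagonal points is shown to lie on the same $\mathcal{C}$ by applying the forward direction of Pascal to a hexagon inscribed in $\mathcal{C}$ in which the diagonal point appears as an unknown second intersection.

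For Stage~1, I take the hexagon $G_{12}G_{13}G_{23}G_{34}G_{24}G_{14}$ in this cyclic order. By Proposition~\ref{prop4} the three pairs of opposite sides satisfy: $(G_{12}G_{13}, G_{34}G_{24})$ meet at $U_{23}$, $(G_{13}G_{23}, G_{24}G_{14})$ meet at $U_{12}$, and $(G_{23}G_{34}, G_{14}G_{12})$ meet at $U_{24}$. All three points lie on the line $g$, so by the converse of Theorem~\ref{th3} the six $G$-points are inscribed in a conic $\mathcal{C}$.

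For Stage~2 I handle $D_1=A_1A_2\cap A_3A_4$; the other two diagonal points are completely analogous. Let $P$ be the second point of intersection of the line $A_1A_2$ with $\mathcal{C}$ (the first being $G_{12}$); I will show $P=D_1$. Applying Theorem~\ref{th3} to the inscribed hexagon $G_{14}G_{12}PG_{34}G_{23}G_{13}$, I examine the three pairs of opposite sides. The sides $G_{14}G_{12}$ and $G_{34}G_{23}$ both pass through $U_{24}$ by Proposition~\ref{prop4}, so they meet at $U_{24}$. The side $G_{12}P$ coincides with $A_1A_2$, and $G_{23}G_{13}$ passes through $U_{12}\in A_1A_2$ (again by Proposition~\ref{prop4}), so they meet at $U_{12}$. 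Writing $X=PG_{34}\cap G_{13}G_{14}$ for the remaining opposite-side intersection, Pascal forces $U_{24},U_{12},X$ to be collinear; since $U_{12},U_{24}\in g$ this common line is $g$, so $X\in g$. But $G_{13}G_{14}$ passes through $U_{34}\in g$, whence $G_{13}G_{14}\cap g=\{U_{34}\}$ and hence $X=U_{34}$. Therefore $PG_{34}$ passes through $U_{34}$, and since $G_{34},U_{34}$ both lie on $A_3A_4$ the line $PG_{34}$ must coincide with $A_3A_4$. Combined with $P\in A_1A_2$ this forces $P=A_1A_2\cap A_3A_4=D_1$, so $D_1\in\mathcal{C}$.

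The main obstacle is the combinatorial design of the Stage~2 hexagon: the six vertices must be arranged so that Proposition~\ref{prop4} automatically places two of the three Pascal intersection points on $g$, which in turn pins the entire Pascal line to $g$ and localises the third intersection at a specific $U_{kl}$; this then identifies the unknown second intersection $P$ as the desired diagonal point. Once the hexagon is correctly chosen for $D_1$, the analogous hexagons for $D_2$ and $D_3$ are obtained by permuting indices, and the non-degeneracy conditions needed (for example $G_{34}\neq U_{34}$ and $G_{13}G_{14}\neq g$) follow from the standing hypothesis that $g$ is incident with neither a vertex nor a diagonal point of $A_1A_2A_3A_4$.
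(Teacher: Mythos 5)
Your Stage 1 is essentially the paper's own argument: the author likewise inscribes the six $G$--points in a conic by applying the converse of Pascal's theorem to a hexagon whose three opposite--side intersections are, via Proposition \ref{prop4}, points $U_{ij}$ lying on $g$ (the paper orders the hexagon as $G_{13}G_{23}G_{12}G_{24}G_{14}G_{34}$, you as $G_{12}G_{13}G_{23}G_{34}G_{24}G_{14}$; both orderings work). Stage 2 is where you genuinely diverge. The paper replaces one $G$--vertex of the hexagon by the diagonal point, e.g.\ it takes $G_{13}G_{23}IG_{14}G_{24}G_{34}$, observes that two sides of this hexagon become the quadrangle sides $A_2A_3$ and $A_1A_4$ (which meet $g$ at $U_{23}$ and $U_{14}$), checks that all three opposite--side intersections again lie on $g$, concludes by the converse of Pascal that this hexagon lies on a conic $k^\prime$, and finally identifies $k^\prime$ with $k$ because they share five points. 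You instead run Pascal forwards on the already--established conic $\mathcal{C}$, introducing the second intersection $P$ of $A_1A_2$ with $\mathcal{C}$ and forcing $P=A_1A_2\cap A_3A_4$. Both arguments are sound and rest on the same two pillars, namely Proposition \ref{prop4} and the collinearity of the $U_{ij}$ on $g$. The paper's route never needs $A_1A_2$ to be a genuine secant of $\mathcal{C}$: your argument tacitly assumes a second intersection point $P\neq G_{12}$ exists and that the hexagon $G_{14}G_{12}PG_{34}G_{23}G_{13}$ is nondegenerate, while the paper trades this for the (equally tacit) assumption that five of its points determine a unique conic. Your localisation of the third Pascal point $X$ at $U_{34}$ and the nondegeneracy facts $G_{34}\neq U_{34}$ and $G_{13}\notin g$ are correct, but note that they follow most cleanly from Proposition \ref{prop1} ($G_{ij}$ is the harmonic conjugate of $U_{ij}$ with respect to $A_i,A_j$) rather than directly from the incidence restrictions imposed on $g$ in Definition \ref{d2}.
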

\begin{figure}
\epsfig{file=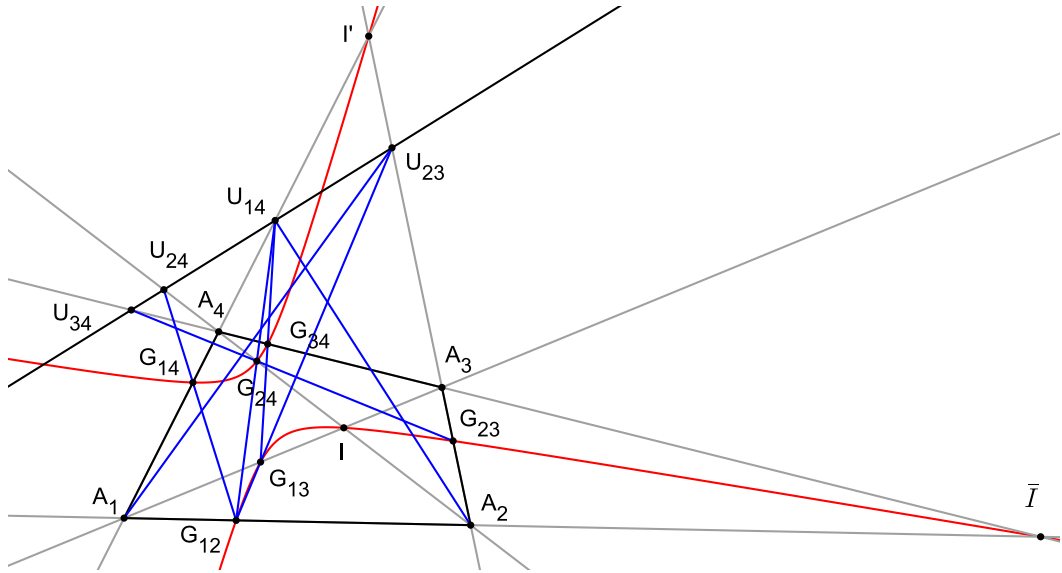,width=0.89\textwidth}
\caption{Nine points conic section}
\label{fig:14}
\end{figure}
\begin{proof}
Let consider the points $G_{13}$, $G_{23}$, $G_{12}$, $G_{24}$, $G_{14}$, $G_{34}$ (Fig. \ref{fig:14}). From Proposition \ref{prop4}
we get that $U_{12}=G_{13}G_{23}\cap G_{24}G_{14}$, $U_{13}=G_{23}G_{12}\cap G_{14}G_{34}$ and $U_{14}=G_{12}G_{24}\cap G_{34}G_{13}$.
By the condition $U_{is}\in g$ it follows that the points $U_{12}$, $U_{13}$ and $U_{14}$ are collinear. According to Theorem \ref{th3}
the hexagon $G_{13}G_{23}G_{12}G_{24}G_{14}G_{34}$ is inscribed into a curve of the second power $k$ and
$g$ is its Pascal's line.

Let consider the points $G_{13}$, $G_{23}$, $I$, $G_{14}$, $G_{24}$, $G_{34}$
Applying Proposition \ref{prop4} and using (\ref{e1})
we get the points $U_{12}=G_{13}G_{23}\cap G_{14}G_{24}$, $U_{23}=G_{23}I \cap G_{24}G_{34}=A_2A_3\cap G_{24}U_{23}$
and $U_{14}=I G_{14}\cap G_{34}G_{13}=A_1A_4\cap G_{34}U_{14}=U_{14}$.
By the condition (\ref{e0}) it follows that the points $U_{12}$, $U_{23}$ and $U_{14}$ are collinear. According to Theorem \ref{th3}
the hexagon $G_{13}G_{23}I G_{14}G_{24}G_{34}$ is inscribed into a curve of second power $k^\prime$ and
$g$ is its Pascal's line.

The curves $k$ and $k^\prime$ coincide because they have five common points.

By similar considerations for the points $G_{13}$, $I^\prime $, $G_{24}$, $G_{34}$, $G_{14}$, $G_{12}$
and for the points $G_{12}$, $\overline{I}$, $G_{34}$, $G_{14}$, $G_{24}$, $G_{23}$
it can be proved
that the hexagons $G_{13}I^\prime G_{24}G_{34}G_{14}G_{14}$ and $G_{12}\overline{I}G_{34}G_{14}G_{24}G_{23}$ are inscribed
into the curve of second power $k$.
\end{proof}

\begin{definition}
The curve $k$ on which lie the nine points $G_{12}$, $G_{13}$, $G_{14}$, $G_{23}$, $G_{24}$, $G_{34}$, $I$, $I^\prime$, $\overline{I}$
will be called the curve of the nine points for the $(q,l)$--pair $(A_1A_2A_3A_4, g)$.
\end{definition}

Some properties of the curve of the nine points, when the points $G_{ij}$ are midpoint of the Euclidian segments $A_iA_j$
have been investigated in \cite{RefZ}.

\begin{theorem}\label{th8}
The pole of $g$ with respect to the curve $k$ of the nine points for the $(q,l)$--pair $(A_1A_2A_3A_4, g)$ is the point
$G=G_{12}G_{34}\cap G_{13}G_{24}\cap G_{14}G_{23}$ .
\end{theorem}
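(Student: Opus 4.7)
The plan is to exploit Theorem \ref{th5} (self-polarity of the diagonal triangle) applied to three different complete quadrangles inscribed in the nine-point conic $k$, built from the six $G_{ij}$ points. The engine is Proposition \ref{prop4}: the line $G_{ij}G_{js}$ passes through $U_{is}\in g$.

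First I would consider the complete quadrangle $G_{12}G_{13}G_{24}G_{34}$, whose four vertices lie on $k$ by Theorem \ref{th7}. Applying Proposition \ref{prop4} to each pair of opposite sides:
\begin{itemize}
\item $G_{12}G_{13}$ and $G_{24}G_{34}$ both pass through $U_{23}$, so they meet at $U_{23}$;
\item $G_{12}G_{24}$ and $G_{13}G_{34}$ both pass through $U_{14}$, so they meet at $U_{14}$;
\item the remaining pair $G_{12}G_{34}$ and $G_{13}G_{24}$ meets at some point $D$.
\end{itemize}
Thus the diagonal triangle of this inscribed quadrangle is $U_{23}U_{14}D$. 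By Theorem \ref{th5} this triangle is self-polar with respect to $k$. Since $U_{23},U_{14}\in g$, the line $g=U_{23}U_{14}$ is the polar of the opposite vertex $D$, i.e. $D$ is the pole of $g$ with respect to $k$. This already shows $G_{12}G_{34}\cap G_{13}G_{24}$ is the pole of $g$.

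Next I would repeat the argument verbatim with the other two complete quadrangles that can be formed from the six $G_{ij}$ points so that the ``natural'' pairings of Proposition \ref{prop4} absorb four of the six sides. Namely, for $G_{12}G_{14}G_{23}G_{34}$ the pairs $(G_{12}G_{14},G_{23}G_{34})$ and $(G_{12}G_{23},G_{14}G_{34})$ meet at $U_{24}$ and $U_{13}$ respectively, both on $g$, whence $G_{12}G_{34}\cap G_{14}G_{23}$ is again the pole of $g$. Likewise for $G_{13}G_{14}G_{23}G_{24}$ the pairs yield $U_{34}$ and $U_{12}$, forcing $G_{13}G_{24}\cap G_{14}G_{23}$ to be the pole of $g$ as well. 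Since the three intersection points all coincide with the (unique) pole of $g$ with respect to $k$, the three lines $G_{12}G_{34}$, $G_{13}G_{24}$, $G_{14}G_{23}$ are concurrent at that pole, which is precisely the point $G$ in the statement.

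The only potentially subtle step is confirming that each of the three quadrangles $G_{12}G_{13}G_{24}G_{34}$, $G_{12}G_{14}G_{23}G_{34}$, $G_{13}G_{14}G_{23}G_{24}$ is actually a \emph{complete} quadrangle (no three vertices collinear) so that Theorem \ref{th5} applies, and that the two ``easy'' diagonal points coming from Proposition \ref{prop4} are genuinely distinct from the third. Both facts follow from the hypothesis in Definition \ref{d2} that $g$ avoids the vertices and diagonal points of $A_1A_2A_3A_4$: this keeps the six $G_{ij}$ in general position and keeps the points $U_{ij}$ pairwise distinct. Once this general-position check is disposed of, the proof is essentially a three-fold application of Theorem \ref{th5} via Proposition \ref{prop4}.
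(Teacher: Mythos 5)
Your proof is correct and follows essentially the same route as the paper: inscribe complete quadrangles with vertices among the $G_{ij}$ in the nine-point conic, use Proposition \ref{prop4} to place two of the three diagonal points on $g$, apply Theorem \ref{th5} to identify the third diagonal point as the pole of $g$, and conclude by uniqueness of the pole. The only difference is that you run the argument for all three quadrangles where two already suffice (the paper uses $G_{12}G_{34}G_{13}G_{24}$ and $G_{12}G_{34}G_{14}G_{23}$); your third application is harmless but redundant.
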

\begin{proof}
Let consider the inscribed complete quadrangles $G_{12}G_{34}G_{13}G_{24}$ and $G_{12}G_{34}G_{14}G_{23}$ (Fig. \ref{fig:14}).
By Proposition \ref{prop4} their diagonal triangles are $U_{14} U_{23}G$, where $G=G_{12}G_{34}\cap G_{13}G_{24}$ and
$U_{13} U_{24}G^\prime$, where $G^\prime=G_{12}G_{34} \cap G_{14}G_{23}$, respectively. According to Theorem \ref{th5}
the pole of the line $U_{14}U_{23}=g$ with respect to the curve $k$ is the point $G$ and the pole of the line $U_{13}U_{24}=g$
with respect to the curve $k$ is the point $G^\prime$. Since the line $g$ is a polar of the points  $G $ and $G^\prime$ with respect to
the curve $k$, then $G=G^\prime$, which means that $G_{12}G_{34} \cap G_{13}G_{24}\cap G_{14}G_{23}=G=G^\prime$.
\end{proof}

If the line $g$ is the infinite line $\omega$ then $G$ is the center of the nine points curve $k$
and the $G_{ij}$--points are the centers of the Sharygin's curves $k_{ij}$ for the $(q,l)$--pair $(A_1A_2A_3A_4,g)$.

From Theorem \ref{th6} and Proposition \ref{prop4} it follows
\begin{proposition}\label{prop5}
There exists two homologies $ \Phi$ and $ \Phi^\prime$ with centers $O$ and $O^\prime$, respectively, for which the line $g$ is a common axis and which are related with every  pair of opposite sides for the quadrangle $A_1A_2A_3A_4$ from the
$(q,l)$--pair $(A_1A_2A_3A_4, g)$, such that:
\begin{equation}\label{e11}
\begin{array}{l}
\Phi \      (A_i, A_j, M_{ji}^s, M_{ij}^k; J, I^\prime) = M_{ks}^j, M_{sk}^i, A_k, A_s; I, L^\prime \ ;  \\ [10pt]
\Phi^\prime (A_i, A_j, M_{ij}^s, M_{ji}^k; J^\prime, I) = M_{sk}^j, M_{ks}^i, A_k, A_s; I^\prime, L \ .
\end{array}
\end{equation}
\end{proposition}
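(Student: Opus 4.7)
The plan is to invoke the classical fact that a homology of the projective plane is determined uniquely by its center, its axis, and a single pair of corresponding points (none of which lies on the axis, the corresponding points being collinear with the center). I will \emph{define} $\Phi$ to be the unique homology with center $O$, axis $g$, and $\Phi(A_i)=M^{j}_{ks}$, and then verify that the other five assignments listed in (\ref{e11}) are forced by this definition. The analogous reasoning will define $\Phi^\prime$ from $O^\prime$, $g$, and $A_i\mapsto M^{j}_{sk}$.

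The basic bookkeeping is the following. Writing $P^\prime=\Phi(P)$, a homology satisfies two incidences for every pair $(P,P^\prime)$: first, the center lies on $PP^\prime$; second, for any already-known pair $(Q,Q^\prime)$ the lines $PQ$ and $P^\prime Q^\prime$ meet the axis $g$ in the same point. Taking the initial pair to be $(A_i,M^{j}_{ks})$, the verification reduces to checking, for each claimed pair $(P,P^\prime)$ in (\ref{e11}), that $O,P,P^\prime$ are collinear and that $A_iP\cap g=M^{j}_{ks}P^\prime\cap g$.

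The collinearities with $O$ are essentially free: they are exactly the content of equation (\ref{e3}), which asserts that $O$ lies on each of the six lines $A_iM^{j}_{ks}$, $A_jM^{i}_{sk}$, $A_sM^{k}_{ij}$, $A_kM^{s}_{ji}$, $JI$, $I^\prime L^\prime$. For the trace condition on $g$ I would treat the pairs one by one, each reduction being a single use of the definitions (\ref{e0})--(\ref{e2}), (\ref{e22}) together with part 1) of Theorem \ref{th6}. For instance, for $(A_j,M^{i}_{sk})$ the line $A_iA_j$ meets $g$ at $U_{ij}$, and the line $M^{j}_{ks}M^{i}_{sk}$ also meets $g$ at $U_{ij}$ by part 1) of Theorem \ref{th6}. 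For $(M^{s}_{ji},A_k)$ both $A_iM^{s}_{ji}$ and $A_kM^{j}_{ks}$ coincide with the line through $U_{is}$: the first because $M^{s}_{ji}\in A_iA_s$, the second because $M^{j}_{ks}\in A_kU_{is}$ by (\ref{e22}). The pairs $(M^{k}_{ij},A_s)$, $(J,I)$, $(I^\prime,L^\prime)$ are handled by the same pattern, each time producing a common intersection with $g$ at one of $U_{js}$, $U_{is}$, $U_{ks}$.

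Once $\Phi$ is settled, $\Phi^\prime$ is defined in the mirror way, using center $O^\prime$ from (\ref{e4}) and initial pair $A_i\mapsto M^{j}_{sk}$; the trace-on-$g$ checks for the five remaining pairs are identical modulo relabelling. I expect no real obstacle beyond discipline in unpacking the notation: every needed collinearity is either in equations (\ref{e3}) and (\ref{e4}), or in part 1) of Theorem \ref{th6}, and the only subtle point is to make sure that the ``initial pair'' used to pin down the homology is chosen off the axis and off the center, which is guaranteed by Definition \ref{d2} (no vertex or diagonal point lies on $g$) and by inspecting that none of $A_i, M^{j}_{ks}$ coincides with $O$.
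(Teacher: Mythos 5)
Your proposal is correct and follows essentially the same route as the paper: the author likewise defines $\Phi$ as the homology with center $O$, axis $g$, and initial pair $A_i\mapsto M_{ks}^j$ (and $\Phi^\prime$ with $A_i\mapsto M_{sk}^j$), then verifies the remaining correspondences via the collinearities through $O$ from (\ref{e3})--(\ref{e4}) and the coincidence of traces on $g$ supplied by part 1) of Theorem \ref{th6} and the definitions (\ref{e0})--(\ref{e2}), (\ref{e22}). The only cosmetic difference is that you make the uniqueness-of-a-homology argument and the off-axis checks explicit, which the paper leaves implicit.
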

\begin{proof}
Using Theorem \ref{th6}, 
and the basic properties of any homology we can verify that the homologies
$\Phi(O, g; A_i \longrightarrow M_{ks}^j)$ and $\Phi^\prime (O, g; A_i \longrightarrow M_{sk}^j)$  have the properties (\ref{e11}).

Indeed from 1) in Theorem \ref{th6} it follows:
$$A_i A_j \cap M_{ks}^j M_{sk}^i =U_{ij},\ \
M_{ji}^s M_{ij}^k \cap A_k A_s = U_{ks}\ ;$$
From (\ref{e1}) and (\ref{e22}) it follows:
$$A_i M_{ji}^k \cap M_{sk}^j A_s = A_i U_{js} \cap A_j A_s = U_{js}\ ;$$
From (\ref{e2}) and (\ref{e22}) it follows:
$$A_i J \cap M_{ks}^j I= A_i U_{js} \cap A_j A_s = U_{js}\ .$$
Taking into account and (\ref{e3}) we conclude that
$$\Phi(A_j, M_{ji}^s, M_{ij}^k, J) = M_{sk}^i, A_k, A_s, I.$$
From (\ref{e0}), (\ref{e1}) and (\ref{e2}) it follows:
$$
\begin{array}{lll}
\Phi(I^\prime)&=&\Phi(A_iA_s \cap A_jA_k)=\Phi(A_iU_{is} \cap A_jU_{jk})\\
&=&M_{ks}^j U_{is} \cap M_{sk}^i U_{jk}=A_k U_{is} \cap A_s U_{jk}=L^\prime\ .
\end{array}
$$
The proof for $\Phi^\prime$ can be done in a similar fashion.
\end{proof}

Boyan Zlatanov: Plovdiv University ``Paisii Hilendarski'', Faculty of Mathematics and Informatics,
24 "Tsar Assen" str., Plovdiv 4000, Bulgaria

 E-mail address: bzlatanov@gmail.com


\begin{thebibliography}{}

\bibitem{RefC1}
H. S. M. Coxeter, {\it The Real Projective Plane}, McGraw-Hill Book Company Inc., 1949.
\bibitem{RefC2}
H.S. M. Coxeter, {\it Projective Geometry}, Springer-Verlag, New York Inc., 1987.

\bibitem{RefKTZ}
S. Karaibryamov, B. Tsareva, B. Zlatanov, Optimization of the Courses in Geometry by the Usage of Dynamic Geometry Software Sam, {\it The Electronic Journal of Mathematics and Technology}, Volume 7 Number 1, 22-51, (2013).
\bibitem{RefZ}
B. Zlatanov, Some Properties of Reflection of Quadrangle about Point, {\it Annals. Computer Science Series}, 11 (1),79-91, (2013).
\bibitem{RefSh}
I. Sharygin, {\it Problems in Plane Geometry}, Mir Publishers, 1988.
\end{thebibliography}
\end{document}